\definecolor{light-gray}{gray}{0.8}
\theoremstyle{plain}
\newtheorem{theorem}{Theorem}[section]
\newtheorem{conjecture}[theorem]{Conjecture}
\newtheorem{proposition}[theorem]{Proposition}
\newtheorem{lemma}[theorem]{Lemma}
\newtheorem{corollary}[theorem]{Corollary}
\theoremstyle{definition}
\newtheorem{definition}[theorem]{Definition}
\newtheorem*{definition*}{Definition}
\newtheorem{example}[theorem]{Example}
\providecommand{\customgenericname}{}
\newcommand{\newcustomtheorem}[2]{%
  \newenvironment{#1}[1]
  {%
   \renewcommand\customgenericname{#2}%
   \renewcommand\theinnercustomgeneric{##1}%
   \innercustomgeneric
  }
  {\endinnercustomgeneric}
}
\numberwithin{equation}{section}
\numberwithin{figure}{section}
\newcommand{\Z}{\mathbb{Z}}
\newcommand{\C}{\mathbb{C}}
\newcommand{\Q}{\mathbb{Q}}
\newcommand{\abs}[1]{\left|#1\right|}
\newcommand{\ds}{\displaystyle}
\newcommand{\I}{\mathcal{I}}
\newcommand{\choos}[2]{\genfrac(){0pt}{0}{#1}{#2}}
\DeclareMathOperator{\len}{\ell}
\DeclareMathOperator{\sgn}{sgn}
\DeclareMathOperator{\Ce}{Z}
\begin{document}
\title{Hook-Shape Immanant Characters from Stanley-Stembridge Characters}

\author{Nathan R.\,T. Lesnevich}
\thanks{The author was partially supported by NSF grant DMS-1954001, and would like to thank Martha Precup and John Shareshian for their continued advice and support.}
\address{Department of Mathematics and Statistics, Washington University in St.\ Louis, One Brookings Drive, St.\ Louis, MO 63130, USA}
\email{\href{mailto:nlesnevich@wustl.edu}{nlesnevich@wustl.edu}}


\begin{abstract}
We consider the Schur-positivity of monomial immanants of Jacobi-Trudi matrices, in particular whether a non-negative coefficient of the trivial Schur function implies non-negative coefficients for other Schur functions in said immanants. We prove that this true for hook-shape Schur functions using combinatorial methods in a representation theory setting. Our main theorem proves that hook-shape immanant characters can be written as finite non-negative integer sums of Stanley-Stembridge characters, and provides an explicit combinatorial formula for these sums. This resolves a special case of a longstanding conjecture of Stanley and Stembridge that posits such a sum exists for all immanant characters. We also provide several simplifications for computing immanant characters, and several corollaries applying the main result to cases where the coefficient of the trivial Schur function in monomial immanants of Jacobi-Trudi matrices is known to be non-negative.
\end{abstract}

\maketitle

\tableofcontents
\section{Introduction}\label{sec:intro}
A fundamental class of objects in the theory and construction of symmetric functions is Jacobi-Trudi matrices. They are indexed by skew shapes, which are ordered pairs of partitions where the Young diagram of the second is contained in that of the first. Jacobi-Trudi matrices are of particular interest as their determinants are skew-Schur functions. In the case where the skew shape is simply a partition, the determinant is the Schur function indexed by that partition. Schur functions are essential in combinatorics and the representation theory of symmetric groups.\par 
Less studied is the theory of immanants. A virtual character of the symmetric group is a function from the symmetric group to the integers that is constant on conjugacy classes. In particular, characters of representations are virtual characters. The sign character of $S_n$ is $w \mapsto \sgn(w)$, and appears in the definition of a determinant of an $n \times n$ matrix $M = [m_{ij}]$,
\[\det(M) = \sum_{w \in S_n} \sgn(w)m_{1,w(1)},...,m_{n,w(n)}.\] 
Immanants are analogues of determinants in which the sign character is replaced with a virtual character of the symmetric group. When the chosen virtual character is the character of an irreducible representation, the corresponding immanant is called \emph{ordinary}. This paper is motivated by the study of immanants that use virtual characters corresponding to monomial symmetric functions under the Frobenius characteristic map, called \emph{monomial immanants}. \par 
Combinatorialists have studied immanants of Jacobi-Trudi matrices, in particular which immanants can be expanded non-negatively in the monomial or Schur bases of symmetric functions, as the skew-Shur functions can for determinants. For any symmetric function this property is referred to as being monomial-positive or Schur-positive, respectively.\par 
It was originally conjectured by Goulden and Jackson \cite{Goulden_Jackson92} and proven by Greene \cite{Greene92} that ordinary immanants of Jacobi-Trudi matrices are monomial-positive. It was conjectured by Stembridge \cite{Stem92} and proven by Haiman \cite{Haiman} that ordinary immanants of Jacobi-Trudi matrices are Schur-positive.\par 
We are here concerned with the following related conjecture of Stembridge.
\begin{conjecture}\label{conj:Stem_imm}\cite[Conj. 4.1]{Stem92}
Monomial immanants of Jacobi-Trudi matrices are Schur-positive.
\end{conjecture}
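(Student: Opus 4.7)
The plan is to pass through the Frobenius characteristic map $\mathrm{ch}$ and reformulate the conjecture in representation-theoretic language. Under $\mathrm{ch}$ Schur functions correspond to irreducible characters of $S_n$, so the coefficient of $s_\rho$ in $\Imm^{\tilde m_\lambda}(J_{\mu/\nu})$ equals the multiplicity of $\chi^\rho$ in a virtual character $\psi^{\tilde m_\lambda}_{\mu/\nu}$ obtained by expanding the immanant in the power-sum basis and pulling back through $\mathrm{ch}$ (where $\tilde m_\lambda$ denotes the virtual character with $\mathrm{ch}(\tilde m_\lambda) = m_\lambda$). The conjecture then asserts that $\psi^{\tilde m_\lambda}_{\mu/\nu}$ is in fact an \emph{honest} character. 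I would aim to prove this by exhibiting $\psi^{\tilde m_\lambda}_{\mu/\nu}$ as a non-negative integer combination of \emph{Stanley-Stembridge characters}, since these are known to arise as genuine $S_n$-characters (from dot actions on cohomology of regular semisimple Hessenberg varieties) and so their irreducible multiplicities are automatically non-negative.

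Because the full conjecture has resisted attack for three decades, the tractable first step is to restrict attention to the case in which the target Schur function $s_\rho$ is indexed by a hook $\rho = (k, 1^{n-k})$. Hook characters are particularly well-behaved: by Murnaghan-Nakayama, $\chi^{(k, 1^{n-k})}$ vanishes on many cycle types and on the remaining ones takes an explicit $\pm 1$ value determined by a single ribbon shape. Using this, I would rewrite the pairing $\langle \Imm^{\tilde m_\lambda}(J_{\mu/\nu}),\, s_{(k,1^{n-k})} \rangle$ as a signed sum indexed by pairs consisting of a permutation $w \in S_n$ (from the immanant definition) together with a combinatorial datum recording the hook rim-hook structure on the cycle type of $w$.

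The core combinatorial step would be to construct a sign-reversing involution on this doubly-signed sum whose fixed points are in explicit bijection with a set indexing a non-negative combination of Stanley-Stembridge characters. Since the building blocks for Stanley-Stembridge characters are naturally $P$-tableaux (or equivalent Hessenberg-compatible fillings), the fixed-point set should match onto such tableaux; the involution would most likely cancel terms via a local move---swapping an entry between two rows, or flipping a ribbon segment---whenever the immanant sign and the Murnaghan-Nakayama sign are compatible. Organizing the surviving fixed points by cycle type should yield an explicit combinatorial formula for the non-negative multiplicities, and hence a concrete expansion into Stanley-Stembridge characters.

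The principal obstacle will be precisely this double alternation. The monomial immanant already carries signs implicit in the determinantal-type expansion of $\tilde m_\lambda$ in the power-sum basis, and the hook character introduces a second layer of signs via Murnaghan-Nakayama; the combined signed sum has substantial cancellation and no obviously non-negative reorganization. Designing a clean involution that is uniform across all skew shapes $\mu/\nu$, and verifying that the resulting multiplicities are honest non-negative integers rather than merely rational, is where the technical work will live. Extending from hooks to arbitrary $\rho$ would then require peeling $\rho$ into hooks via some inductive device, but controlling the new cancellations is precisely what keeps the full conjecture open.
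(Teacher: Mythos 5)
The first thing to flag is that the statement you are proving is Conjecture \ref{conj:Stem_imm}, which this paper does not prove and which remains open; the paper's unconditional content is a \emph{reduction} (Theorem \ref{thm:hook_partition} and Corollary \ref{cor:hook_partition}), so your proposal has to be judged as a reduction strategy, and on that score its central inference is flawed. You propose to write the relevant virtual character as a non-negative integer combination of Stanley--Stembridge characters and conclude ``since these are known to arise as genuine $S_n$-characters \dots their irreducible multiplicities are automatically non-negative.'' That is the wrong kind of positivity. By Lemma \ref{lem:inner_prod}, the coefficient of $s_\theta$ in the \emph{monomial} immanant $\phi^\lambda[H_{\mu/\nu}]$ is $\langle \Gamma^\theta_{\mu/\nu},\phi^\lambda\rangle$, i.e.\ the coefficient of the induced trivial character $\eta^\lambda$ in $\Gamma^\theta_{\mu/\nu}$. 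If $\Gamma^\theta_{\mu/\nu}=\sum_J \Gamma_{h^J}$ (which is what the paper proves for hook $\theta$, with the summands living in $S_n$, $n=\ell(\mu/\nu)$, not $S_N$), then what you need from each summand is $\langle \Gamma_{h^J},\phi^\lambda\rangle\ge 0$, i.e.\ $h$-positivity of $\Gamma_{h^J}$ --- and that is exactly the Stanley--Stembridge conjecture, not a known fact. Genuineness of $\Gamma_{h^J}$ only gives $\langle \Gamma_{h^J},\chi^\lambda\rangle\ge 0$, which via Lemma \ref{lem:inner_prod} recovers Haiman's theorem on \emph{ordinary} immanants and says nothing new about monomial ones. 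This is precisely why the paper deduces Conjecture \ref{conj:Stem_G}/\ref{conj:Stem_imm} only for pre-abelian shapes and for $h(i)-i\le 2$ (Corollaries \ref{cor:preabelian} and \ref{cor:small}), where Stanley--Stembridge is known.

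Second, the combinatorial engine you sketch --- Murnaghan--Nakayama signs against immanant signs, cancelled by an as-yet-undesigned sign-reversing involution --- is unnecessary for the hook reduction, and leaving it undesigned means no part of the argument is actually carried out. The paper's mechanism is sign-free: for a hook $\theta=(N-k,1,\dots,1)$, Lemma \ref{lem:hook_kostka} gives $K_{\theta,\widehat{w}}=\choos{n-z_w-1}{k}$, where $z_w$ is the number of zero entries of $\widehat{w}$, and Theorem \ref{thm:hook_partition} realizes this binomial coefficient as the number of $k$-subsets $J\subseteq[n-1]$ with $J\cap Z_w=\emptyset$, each such $J$ contributing the indicator $\widehat{h^J}(w)$ of an explicitly deformed Hessenberg function; Lemma \ref{lem:suffices} then upgrades this pointwise identity of non-negative integers to the character identity $\Gamma^\theta_{\mu/\nu}=\sum_J\Gamma_{h^J}$. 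If you pursue your plan, replace the doubly-signed sum by this Kostka-number count, and either restrict to skew shapes whose deformed Hessenberg functions lie in a class where Stanley--Stembridge is known, or accept that the end product is a reduction to that conjecture rather than a proof of Conjecture \ref{conj:Stem_imm}.
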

In \cite[4.1]{Stem92}, Stembridge defined a virtual character $\Gamma^\theta_{\mu/\nu}$, where $\theta \vdash N$ is a partition and $\mu/\nu$ is a skew shape with such that $N=\abs{\mu/\nu}$. The character $\Gamma^\theta_{\mu/\nu}$ is defined in detail in Section 2 below. We call $\Gamma_{\mu/\nu}^\theta$ the \emph{immanant character}, so named as it yields an equivalent formulation of Conjecture \ref{conj:Stem_imm}. 
\begin{customconj}{\ref{conj:Stem_imm}${}^\prime$}\label{conj:Stem_G}\cite[Conj. 4.1${}^\prime$]{Stem92}
$\Gamma^\theta_{\mu/\nu}$ is the character of a permutation representation of $S_n$ whose transitive components are each isomorphic to the action of $S_n$ on cosets of a Young subgroup.
\end{customconj}
Conjectures \ref{conj:Stem_imm} and \ref{conj:main_char} are also stated in \cite{Stan_Stem93} using the language of symmetric functions.\par 
Perhaps better known (and more often studied, as in \cite{Stan95, Gasharov96, Sharesh_Wachs16, GuayPaquet13, Abreu_Nigro20, harada2017cohomology, brosnan2018unit}, and many others) is the Stanley-Stembridge conjecture \cite{Stem92, Stan_Stem93}, which is Conjecture \ref{conj:Stem_G} in the particular case that $\theta = (N)$ (the original Stanley-Stembridge conjecture is a more general statement, but was reduced to this form in \cite{GuayPaquet13}). Because of this, we call $\Gamma^{(N)}_{\mu/\nu}$ the \emph{Stanley-Stembridge character}.\par  
The following conjecture of Stanley and Stembridge reduces Conjecture \ref{conj:Stem_G} to the Stanley-Stembridge conjecture.
\begin{conjecture}\label{conj:main_char}\cite[Conj. 5.1]{Stan_Stem93}
Every immanant character $\Gamma^\theta_{\mu/\nu}$ is a non-negative integral sum of Stanley-Stembridge characters.
\end{conjecture}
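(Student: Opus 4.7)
The plan is to reduce the general case to the hook-shape case by induction on a complexity measure of $\theta$ (for instance, the number of parts of $\theta$ exceeding $1$), with the paper's main hook-case theorem as the inductive base and the single-row case $\theta=(N)$ as a tautological base. The inductive step aims to produce, for each non-hook $\theta \vdash N$ and each skew shape $\mu/\nu$ with $\abs{\mu/\nu} = N$, an expansion
\[
\Gamma^\theta_{\mu/\nu} \;=\; \sum_{\theta' \text{ hook}} \sum_{\mu'/\nu'} c^{\theta'}_{\mu'/\nu'} \, \Gamma^{\theta'}_{\mu'/\nu'}
\]
with $c^{\theta'}_{\mu'/\nu'} \in \Z_{\geq 0}$, after which the hook-case theorem applied term-by-term yields the desired Stanley-Stembridge decomposition.

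To produce such an expansion I would first try a symmetric-function identity: write $m_\theta$ as a $\Z$-linear combination of products of hook monomials $m_{(k,1^j)}$ via a corner-peeling recurrence on $\theta$, then transport the product to the character level via multiplicativity of Frobenius characteristic together with concatenation of skew shapes. More precisely, the natural multiplicativity formula to pursue is
\[
\Gamma^{\theta_1 \cup \theta_2}_{\mu/\nu} \;=\; \sum_{\mu/\nu = \alpha \sqcup \beta} \Gamma^{\theta_1}_{\alpha} \cdot \Gamma^{\theta_2}_{\beta},
\]
in parallel with the Littlewood-Richardson rule for Schur functions, where the sum ranges over ordered decompositions of the skew shape $\mu/\nu$ into two skew subshapes. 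If the na\"ive version carries signs, I would attempt a sign-reversing involution on the underlying combinatorial objects, in the spirit of the Greene and Haiman arguments that handled the ordinary-immanant case.

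The main obstacle is twofold. First, hook monomials $m_{(k,1^j)}$ do not span the $m$-basis of $\Lambda$, so no identity purely in symmetric functions can express $m_\theta$ non-negatively in hook monomials; the reduction must genuinely use the extra data of $\mu/\nu$, which is not symmetric-function data. Second, and more fundamentally, whatever combinatorial model underlies the hook-case proof (presumably an insertion scheme or chain decomposition adapted to hook tableaux) does not obviously extend to arbitrary $\theta$, and producing such an extension is essentially the entire open content of the present conjecture. I expect this last step --- constructing an explicit weight-preserving decomposition of the relevant tableau set into Stanley-Stembridge components for general $\theta$ --- to be the principal obstacle, and to require combinatorial ideas genuinely beyond those sufficient for hook shapes.
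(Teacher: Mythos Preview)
The statement you are attempting is Conjecture~\ref{conj:main_char}, which the paper does \emph{not} prove; it remains open. The paper's contribution is precisely the base case you invoke---the hook-shape case (Theorem~A, equivalently Corollary~\ref{cor:hook_partition})---and nothing more. No inductive step from hooks to general $\theta$ appears in the paper, nor does the paper claim one exists.

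Your proposed reduction faces the obstacles you yourself identify, and they are genuine rather than merely technical. First, hook monomials do not positively span $\{m_\lambda\}$, so no symmetric-function identity alone can supply a non-negative expansion of $m_\theta$ in hook monomials. Second, the multiplicativity formula you write,
\[
\Gamma^{\theta_1 \cup \theta_2}_{\mu/\nu} \;=\; \sum_{\mu/\nu = \alpha \sqcup \beta} \Gamma^{\theta_1}_{\alpha} \cdot \Gamma^{\theta_2}_{\beta},
\]
is not established in the paper and is not correct as stated: on the symmetric-function side $m_{\theta_1} m_{\theta_2}$ is a $\Z$-linear combination of several $m_\lambda$, not the single term $m_{\theta_1\cup\theta_2}$, and the only multiplicativity result the paper supplies (Proposition~\ref{prop:disconnect}) decomposes $\Gamma^\theta_{\mu/\nu}$ along a disconnection of $\mu/\nu$ with Littlewood--Richardson coefficients, not along arbitrary splittings of a connected skew shape into subshapes. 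In short, the inductive step you outline is not merely hard but presently unavailable; furnishing it would amount to resolving the open conjecture, which is beyond what the paper achieves.
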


Conjectures \ref{conj:main_char} and \ref{conj:Stem_imm} are proven assuming the skew shape $\mu/\nu$ contains no $2\times 2$ box in its Young diagram \cite[\S 2]{Stan_Stem93}. This paper proves the case of Conjecture \ref{conj:main_char} when $\theta$ is a hook-shape partition $(N-k,1,...,1)$ and $\mu/\nu$ is arbitrary. Our main theorem is the following.
\begin{customthm}{A}
Let $\theta$ be a hook-shape partition and $\mu/\nu$ a skew shape. Then the immanant character ${\ds\Gamma_{\mu/\nu}^\theta}$ is a non-negative integer sum of Stanley-Stembridge characters.
\end{customthm}
Theorem A is Corollary \ref{cor:hook_partition} below, which gives an explicit combinatorial construction of the Stanley-Stembridge character summands. \par
We apply Theorem A to prove new cases of Conjecture \ref{conj:Stem_G}.
\begin{customcor}{B}
Let $\theta$ be a hook partition and $\mu/\nu$ a skew shape such that $\mu/\nu$ either:
\begin{itemize}
\item is pre-abelian, or
\item contains no $3\times 3$ box.
\end{itemize} 
Then Conjecture \ref{conj:Stem_G} holds for $\Gamma^\theta_{\mu/\nu}$.
\end{customcor}
Corollary B is a combination of Corollary \ref{cor:preabelian} and Corollary \ref{cor:small}.\par 
 We now give a brief overview of the contents of this paper. Section \ref{sec:prelim} gives necessary constructions and definitions for our proofs. Subsection 2.3 in particular contains background material on the connection to Hessenberg functions that is necessary to understand the proof of our main theorem. Section \ref{sec:simp} contains computational reductions whose proofs are relegated to Appendix A. These reduce Conjectures \ref{conj:Stem_imm} and \ref{conj:main_char} to a smaller class of skew shapes. Section 4 contains the proof of Theorem A, including an explicit decomposition of $\Gamma^\theta_{\mu/\nu}$ into Stanley-Stembridge characters when $\theta$ is a hook partition. This section also contains several corollaries of interest, including the results of Corollary B.
 \section{Characters, Immanants, and Hessenberg Functions}\label{sec:prelim}
A \emph{partition} $\lambda$ of length $\ell=:\len(\lambda)$ of a positive integer $n$ is a weakly decreasing sequence $(\lambda_1,...,\lambda_\ell)$ of positive integers such that $\sum_{i=1}^\ell \lambda_i = n$. If $\lambda$ is a partition of $n$ we write $\lambda \vdash n$.  The \emph{Young diagram} of $\lambda$ is a collection of upper-left justified boxes containing $\lambda_i$ boxes in row $i$. A \emph{standard Young tableau} is a filling of a Young diagram with distinct integers from $[n]$ that increases along rows and down columns. A \emph{semi-standard Young tableau} is a filling of those boxes with positive integers weakly increasing along rows and strictly increasing down columns. The \emph{content} of a semi-standard Young tableau is the list $c = (c_1,\ldots)$ such that the $c_i$ is the number of times $i$ appears in the tableau.\par 

Given a partition $\lambda \vdash n$ and any sequence of non-negative integers $c$ that sum to $n$, the \emph{Kostka number} $K_{\lambda,c}$ is the number of semi-standard Young tableaux with shape $\lambda$ and content $c$. The value $K_{\lambda,c}$ is unaffected by re-ordering the entries of $c$ or removing zeros. For example, if $c = (4,2,3,1)$ and $c' = (1,2,3,4)$ then $K_{\lambda,c} = K_{\lambda,c'}$ for all $\lambda \vdash 10$.
\begin{example}
Let $\theta = (6,1,1)$ and $c = (2,2,3,1)$. Then $K_{\theta,c} = 3$ and the semistandard Young tableaux of shape $\theta$ and content $c$ are
\[
\begin{ytableau}
1&1&2&3&3&4\\
2\\3
\end{ytableau}\;\;\;\;\;\;
\begin{ytableau}
1&1&2&3&3&3\\
2\\4
\end{ytableau}\;\;\;\;\;\;
\begin{ytableau}
1&1&2&2&3&3\\
3\\4
\end{ytableau}.
\]
Note that $K_{\theta,c} = \choos{3}{2}$ in this case.
\end{example}
The following lemma shows that Kostka numbers associated to hook partitions are particularly nice.
\begin{lemma}\label{lem:hook_kostka}
Let $\theta = (N-k,1,...,1)$ be a partition of $N$ and $c$ a content with $r$ nonzero entries. Then $K_{\theta,c} = \choos{r-1}{k}$.
\end{lemma}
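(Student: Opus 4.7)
The plan is to enumerate the SSYTs of shape $\theta = (N-k,1^k)$ and content $c$ directly, by exploiting the very restrictive structure of a hook: every cell of $\theta$ other than the corner $(1,1)$ lies either in the first row (to its right) or in the first column (below it), and nowhere else.

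First, I would fix notation: let $v_1 < v_2 < \cdots < v_r$ be the distinct values with $c_{v_i} > 0$, and write $m_i := c_{v_i}$, so $\sum_{i=1}^r m_i = N$. The key forcing step is to observe that in any SSYT $T$ of shape $\theta$ with content $c$, the corner entry $T(1,1)$ must equal $v_1$: every other cell $(1,j)$ with $j>1$ satisfies $T(1,j) \ge T(1,1)$ (first row weakly increasing), and every other cell $(i,1)$ with $i>1$ satisfies $T(i,1) > T(1,1)$ (first column strictly increasing), so $T(1,1)$ is the minimum entry in $T$, which must be $v_1$.

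Next, I would set up a bijection between SSYTs counted by $K_{\theta,c}$ and $k$-element subsets of $\{v_2,\dots,v_r\}$. Given such an SSYT $T$, let $S := \{T(2,1), T(3,1), \dots, T(k+1,1)\}$; by strict increase this is a $k$-element subset of $\{v_2,\dots,v_r\}$. Conversely, given $S \subseteq \{v_2,\dots,v_r\}$ with $|S|=k$, I construct the unique candidate $T$: place $v_1$ in $(1,1)$, place the elements of $S$ in increasing order down the first column, and fill the remaining $N-k-1$ cells of row $1$ by listing, in weakly increasing order, the multiset obtained from $\{v_1^{m_1},\dots,v_r^{m_r}\}$ after removing one copy of $v_1$ and one copy of each $v \in S$. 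This produces a well-defined filling because each $v \in S \cup \{v_1\}$ satisfies $m_i \ge 1$, so the multiplicities stay non-negative, and the total number of cells matches.

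Finally, I would check the SSYT conditions for the constructed $T$: rows are weakly increasing by construction of row $1$; the first column is strictly increasing because $v_1 < \min S$ and $S$ is listed in increasing order; and the only potential column constraint between row $1$ and row $2$ is at column $1$, already handled. Since the two maps are mutually inverse, $K_{\theta,c} = \binom{r-1}{k}$, as claimed. There is no real obstacle here — the proof is essentially the hook-shape combinatorics — the only item to be careful about is writing the forcing argument for $T(1,1) = v_1$ cleanly, and verifying that the first-row filling is automatically legal regardless of $S$.
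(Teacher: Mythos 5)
Your proof is correct and is essentially the same argument as the paper's: the paper also notes the corner must be the smallest value and identifies each tableau with the size-$k$ subset of $\{2,\dots,r\}$ given by the entries below the corner. Your write-up just spells out the inverse map and the verification of the SSYT conditions that the paper leaves implicit.
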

\begin{proof}
We may assume that $c_1,...,c_r$ are the nonzero entries of $c$. Consider the $\choos{r-1}{k}$ size-$k$ subsets of $\{2,...,r\}$. Note that the top-left box in any semi-standard tableaux of shape $\theta$ and content $c$ must be $1$. Given a semistandard tableau of shape $\theta$ and content $c$, the set of entries in rows $2$ through $k+1$ determine a unique size-$k$ subset of $\{2,...,r\}$.  This correspondence defines a bijection.
\end{proof}
\par 
Given two partitions $\mu$ and $\nu$ such that $\len(\mu) \geq \len(\nu)$ and $\mu_i \geq \nu_i$ for all $i$, then we say $\nu < \mu$ and the pair of partitions is called a \emph{skew shape}, denoted $\mu/\nu$. The Young diagram of $\mu/\nu$ is the diagram of $\mu$ with the boxes of $\nu$ removed. If $\mu/\nu$ is a skew shape, its \emph{length} $\ell(\mu/\nu)$ is the largest index $i$ such that $\nu_i \lneq \mu_i$. If $\mu \vdash N_\mu$ and $\nu \vdash N_\nu$ then the \emph{size} of $\mu/\nu$ is $\abs{\mu/\nu} := N_\mu-N_\nu$. Standard and semistandard tableaux of skew shapes are defined with the same conditions on rows and columns as for partitions.

\subsection{Characters and symmetric functions} 
Let $S_n$ be the symmetric group on $n$ letters, and $C(w)$ the conjugacy class and $Z(w)$ the centralizer of $w$ in $S_n$. When the particular symmetric group is not clear from context, we will write $C_n(w)$ and $Z_n(w)$ to denote the conjugacy class and centralizer of $w$ in $S_n$. A \emph{virtual character} of $S_n$ is a function from $S_n$ to $\Z$ that is constant on conjugacy classes. As conjugacy classes are in bijection with partitions, virtual characters are also functions from the set $\{\lambda \vdash n\}$ of partitions of $n$ to $\Z$. We will denote the conjugacy class of $S_n$ associated to $\lambda\vdash n$ by $C(\lambda)$. A virtual character is a  \emph{character} if it arises as the character of a representation of $S_n$.\par 
\begin{example} The length $\ell(w)$ of a permutation $w \in S_n$ is the number of inversions of $w$. The sign character of $S_n$ is defined by $\sgn(w) = (-1)^{\ell(w)}$. A slightly more complicated example is the character that counts the number of fixed points of each permutation: $w \mapsto \abs{\{i \in [n] \mid w(i)=i\}}$. Both are virtual characters, and also happen to be characters of the sign and natural representations of $S_n$ respectively.
\end{example}
Symmetric functions (with coefficients in $\Z$) are formal power series in $\Z[x_1,\ldots]$ invariant under any permutation of the variables. The symmetric functions form a graded ring over $\Z$ denoted by $\Lambda$ with several important bases. Each of these bases is indexed by partitions of positive integers. The bases used herein are the monomial, homogeneous, and Schur symmetric functions, denoted by $\{m_\lambda\}$, $\{h_\lambda\}$, and $\{s_\lambda\}$, respectively. We also make use of the power-sum symmetric functions $\{p_\lambda\}$, which form a basis of $\Q \otimes \Lambda$. For more information on the enumerative combinatorics of symmetric function bases see \cite{Stanley_Vol1, Stanley_Vol2}.\par 
There is a natural inner product on the space of virtual characters.
If $\chi$ and $\psi$ are virtual characters of $S_n$, the \emph{character inner product} is the bilinear map on the space of virtual characters given by
\begin{align*}
\langle \chi, \psi \rangle &= \frac{1}{n!} \sum_{w \in S_n} \chi(w) \psi(w).
\end{align*}
Much like $\Lambda$, the space of all virtual characters of symmetric groups can also be given a graded ring structure. The \emph{induction product} of virtual characters $\phi$ of $S_k$ and $\psi$ of $S_\ell$ is
\[
\phi \circ \psi := \left.\left( \phi \times \psi \right)\right\uparrow_{S_k \times S_\ell}^{S_{k+\ell}},
\]
which is itself a character of $S_{k+\ell}$ \cite{sagan2013symmetric}.\par 
There is also an inner product on the ring of symmetric functions. Let $\langle \cdot,\cdot \rangle \colon \Lambda \time \Lambda \to \Q$ be defined so that the Schur symmetric functions basis is orthonormal. With this inner product the monomial and homogeneous functions form dual bases, so 
\[\langle m_\lambda,h_\mu \rangle = \begin{cases} 1 & \lambda = \mu \\ 0 & \text{otherwise}\end{cases}\].\par 
There is an isometric isomorphism between the ring of symmetric functions and the ring of virtual characters of all symmetric groups via the \emph{Frobenius characteristic map} $\mathrm{Frob}$, which produces a symmetric function from a virtual character $\chi$ on $S_n$ defined by:
\[
\mathrm{Frob}(\chi) := \sum_{\mu \vdash n} z_\mu^{-1} \chi(\mu) p_\mu, \;\;\;\text{where}\;\; z_\mu = \frac{n!}{\abs{C(\mu)}}.
\] 
The Frobenius characteristic map sends the characters of irreducible representations to the Schur symmetric function basis and the characters of representations defined by the action of the symmetric group on cosets of Young subgroups to the homogeneous symmetric function basis. The monomial symmetric functions are mapped to by virtual characters called \emph{monomial virtual characters}. We fix the notation 
\[
\mathrm{Frob}^{-1}(s_\lambda) =: \chi^\lambda,\;\; \mathrm{Frob}^{-1}(h_\lambda) =: \eta^\lambda,\; \text{ and }\; \mathrm{Frob}^{-1}(m_\lambda) =: \phi^\lambda.
\]
Since the $\chi^\lambda$ are characters of irreducible representations, we will call them \emph{irreducible characters}. Since the $\{\eta^\lambda\}$ correspond to induced characters of the trivial character on Young subgroups, we will call them \emph{induced trivial characters}. As such, if $\chi$ is a virtual character of $S_n$, the following are equivalent:
\begin{enumerate}
\item $\chi$ is the character of a permutation representation of $S_n$ whose transitive components are each isomorphic to the action of $S_n$ on cosets of a Young subgroup,
\item ${\ds \mathrm{Frob}(\chi) = \sum_{\lambda \vdash n} c_\lambda h_\lambda}$ where every $c_\lambda$ is a non-negative integer (i.e. $\mathrm{Frob}(\chi)$ is $h$-positive), and
\item ${\ds \chi = \sum_{\lambda \vdash n} c_\lambda \eta^\lambda}$ where every $c_\lambda$ is a non-negative integer.
\end{enumerate}
  Recall that $\{h_\lambda\}$ and $\{m_\lambda\}$ are dual bases in symmetric functions, so $\{\eta^\lambda\}$ and $\{\phi^\lambda\}$ are dual bases in the space of virtual characters of $S_n$. 

For more information on the correspondence between symmetric functions, characters, and representations see \cite{sagan2013symmetric}, and for a very thorough treatment of  symmetric functions see \cite{Stanley_Vol2}.\par 
\subsection{Immanants and the Immanant Character}
An immanant is a generalization of the determinant where the sign character is replaced with any virtual character. 
\begin{definition}
Let $M = [m_{ij}]_{1\leq i,j \leq n}$ be an $n\times n$ matrix with entries from an algebra over $\C$, and $\chi \colon S_n \to \Z$ a virtual character. The \emph{immanant} of $M$ with respect to $\chi$ is 
\[
\chi[M] := \sum_{w \in S_n} \chi(w) m_{1,w(1)}\cdots m_{n,w(n)}.
\]
When $\chi = \chi^\lambda$ is the character of an irreducible representation $\chi^\lambda[M]$ is referred to as an \emph{ordinary immanant}, and when $\chi = \phi^\lambda$ is a monomial virtual character $\phi^\lambda[M]$ is called a \emph{monomial immanant}.
\end{definition}
We consider matrices of symmetric functions, particularly Jacobi-Trudi matrices. 
\begin{definition}
Let $\mu/\nu$ be a skew shape of length $n$. The \emph{Jacobi-Trudi matrix} $H_{\mu/\nu}$ associated to $\mu/\nu$ is the $n \times n$ matrix whose $(i,j)$-th entry is the homogeneous symmetric function $h_{\mu_i-\nu_j+i-j}$,
\[
H_{\mu/\nu} := [h_{\mu_i-\nu_j+i-j}]_{1\leq i,j\leq n}.
\]
We set $h_0 = 1$, and if $\mu_i-\nu_j+i-j < 0$ then we set $h_{\mu_i-\nu_j+i-j}=0$.
\end{definition}
\begin{example}
The following are some (virtual) characters of $S_3$ and the Jacobi-Trudi matrix associated to skew shape $(2,2,2)/(1)$. Note that $\phi^{(2,1)}$ is the monomial character corresponding to $m_{(2,1)}$.
\begin{center}
\begin{tabular}{c|ccc}
Char         &$(1,1,1)$&$(2,1)$&$(3)$   \\\hline
$\sgn$         & $1$& $-1$& $1$      \\
$\chi^{(2,1)}$    & $2$& $0$& $-1$     \\
$\phi^{(2,1)}$   & $0$& $2$& $-3$
\end{tabular}
${\ds \;\;\;\;\;\;\;
H_{(2,2,2)/(1)} = 
\begin{bmatrix}
h_1 & h_3 & h_4 \\
1 & h_2 & h_3 \\
0 & h_1 & h_2 \\
\end{bmatrix}
}$
\end{center}
Computing the associated immanants, we obtain
\begin{align*}
\sgn\left[H_{222/1}\right] &= s_{(2,2,2)/(1)} = s_{(2,2,1)} \\ \\
\chi^{(2,1)}\left[H_{(2,2,2)/(1)}\right] &=2h_{(2,2,1)} - h_{(4,1)}\\
&= 2s_{(2,2,1)} + 2s_{(3,1,1)} + 4s_{(3,2)} + 3s_{(4,1)} + s_{(5)}\\ \\
\phi^{(2,1)}\left[H_{(2,2,2)/(1)}\right] &=2(h_{(3,2)}+h_{(3,1,1)}) - 3h_{(4,1)}\\
&= 2s_{(3,1,1)} + 4s_{(3,2)} + 3s_{(4,1)} + s_{(5)}.
\end{align*}
\end{example}
The determinants of $H_{\mu/\nu}$ are well studied, as by the Jacobi-Trudi identity $\det (H_{\mu/\nu}) = s_{\mu/\nu}$ is a skew-Schur function. In particular if $\nu = \emptyset$ then $\det (H_{\mu/\nu}) = s_\mu$ is a Schur function. Skew-Schur functions are known to be Schur positive, a fact that follows from the Littlewood-Richardson rule \cite{Stanley_Vol2}. Recall that Haiman proved 
\begin{theorem}\label{thm:ordinary_schur}\cite{Haiman}
Ordinary immanants of Jacobi-Trudi matrices are Schur-positive.
\end{theorem}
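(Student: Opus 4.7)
The plan is to pass to the Hecke algebra $H_n(q)$ of type $A_{n-1}$ and exploit Kazhdan--Lusztig positivity, following Haiman's approach. Recall that $H_n(q)$ has the standard basis $\{T_w : w \in S_n\}$ and the Kazhdan--Lusztig basis $\{C'_w : w \in S_n\}$, and the transition coefficients between these two bases are the Kazhdan--Lusztig polynomials $P_{u,w}(q)$, which have non-negative integer coefficients.

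First I would construct a $q$-analogue of the Jacobi--Trudi immanant. Each virtual character $\chi$ of $S_n$ lifts to a trace on $H_n(q)$, and the homogeneous symmetric functions $h_k$ admit natural $q$-analogues built from sums of $T_w$'s in the Hecke algebra. This produces a $q$-immanant $\chi_q[H_{\mu/\nu}(q)]$ that specializes at $q=1$ to the classical immanant $\chi[H_{\mu/\nu}]$; using a quantum Jacobi--Trudi identity one verifies as a sanity check that the $q$-determinant recovers the $q$-analogue of $s_{\mu/\nu}$, extending the classical case.

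Second, I would re-expand the $q$-immanant with respect to the Kazhdan--Lusztig basis, writing
\[
\chi^\lambda_q[H_{\mu/\nu}(q)] = \sum_{w \in S_n} a_w(q)\,\langle \chi^\lambda_q,\, C'_w \rangle,
\]
and then invoke Haiman's key positivity statement: each coefficient $a_w(q)$ lies in $\Z_{\geq 0}[q]$, and the pairings $\langle \chi^\lambda_q, C'_w \rangle$, after evaluation on the $q$-Jacobi--Trudi matrix, themselves expand in the Schur basis with coefficients in $\Z_{\geq 0}[q]$. Specializing $q \to 1$ collapses the $q$-immanant back to $\chi^\lambda[H_{\mu/\nu}]$, and the positivity at the $q$-level yields the desired Schur-positivity.

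The main obstacle is this middle step. The reduction from the classical immanant to the Hecke-algebra setting and back is formal, but the Kazhdan--Lusztig-positivity of the $q$-immanant on Jacobi--Trudi matrices is the substantive content of Haiman's theorem: it relies on an intricate interplay between KL combinatorics and the $h_k(q)$ evaluations, and ultimately appeals to the deep geometric positivity of Kazhdan--Lusztig polynomials (via intersection cohomology stalks on Schubert varieties). Verifying this KL-positivity on Jacobi--Trudi matrices is exactly where genuinely new work is required; everything else in the proposal is bookkeeping.
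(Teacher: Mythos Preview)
The paper does not actually prove Theorem~\ref{thm:ordinary_schur}; it is stated there as a result quoted from \cite{Haiman} and used as background input (in particular, to conclude that the immanant character $\Gamma^\theta_{\mu/\nu}$ is a genuine character). There is therefore no in-paper proof to compare your proposal against.

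That said, your sketch is broadly in line with Haiman's original argument: pass to the Hecke algebra $H_n(q)$, replace the irreducible $S_n$-characters by their Hecke traces, re-expand in the Kazhdan--Lusztig basis, and invoke the (geometric) non-negativity of Kazhdan--Lusztig polynomials to obtain Schur-positivity at $q=1$. You correctly identify the substantive step as the KL-positivity assertion; the rest is indeed largely formal. A couple of your intermediate formulations are a bit loose---for instance, what exactly you mean by the ``$q$-analogues'' $h_k(q)$ and by the pairings $\langle \chi^\lambda_q, C'_w\rangle$ evaluated on the Jacobi--Trudi matrix needs to be made precise before one can check that the claimed positivity really holds term by term---but as a roadmap of Haiman's proof this is reasonable. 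Since the present paper treats the result as a black box, any further refinement of your outline would have to be checked against \cite{Haiman} directly rather than against anything here.
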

Conjecture \ref{conj:Stem_imm} considers whether monomial immanants of Jacobi-Trudi matrices are Schur positive, as is the case with the determinant and other ordinary immanants.\par 
To study Conjecture \ref{conj:Stem_imm}, we introduce a character originally defined by Stembridge \cite{Stem92}.
\begin{definition}
Let $\theta \vdash N$, and $\mu/\nu$ a skew shape with $\abs{\mu/\nu} = N$. Let $n$ be at least the length of $\mu/\nu$, and $w \in S_n$. Let $\delta := (n-1,...,1,0)$, and let $w \in S_n$ act on integer sequences by shuffling, so that $w(a_1,...,a_n) = (a_{w^{-1}(1)},...,a_{w^{-1}(n)})$. The \emph{immanant character} $\Gamma^\theta_{\mu/\nu}$ is the function
\begin{equation}\label{eqn:gamma}
\Gamma^{\theta}_{\mu/\nu}(w) = \frac{n!}{\abs{C(w)}} \sum_{w' \in C(w)} K_{\theta, \mu + \delta-w'(\nu+\delta)}
\end{equation}
We will denote $\mu + \delta-w'(\nu+\delta)$ as $\widehat{w'}$ when $\mu$, $\nu$, and $\delta$ are clear.
\end{definition}
The following is stated in \cite{Stem92} and we include the proof here for the reader's convenience.
\begin{lemma}\label{lem:inner_prod}\cite{Stem92}
Let $\phi$ be any virtual character. Then  the inner product $\left\langle\Gamma^\theta_{\mu/\nu},\phi\right\rangle$ is the coefficient of $s_\theta$ in the Schur expansion of the immanant $\phi[H_{\mu/\nu}]$.
\end{lemma}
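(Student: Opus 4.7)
The plan is to unfold both sides of the claimed identity using the three definitions (the immanant, the character inner product, and $\Gamma^\theta_{\mu/\nu}$) and show they agree.

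First, I expand the inner product. Substituting (\ref{eqn:gamma}) and using that $\phi$ is constant on conjugacy classes (so that $\phi(w) = \phi(w')$ for every $w' \in C(w)$), the double sum
\[
\langle \Gamma^\theta_{\mu/\nu}, \phi\rangle \;=\; \frac{1}{n!} \sum_{w \in S_n} \frac{n!}{|C(w)|} \sum_{w' \in C(w)} K_{\theta,\widehat{w'}}\,\phi(w)
\]
reorganizes over conjugacy classes: each class $C$ contributes $|C|$ outer summands, cancelling the $1/|C|$ factor, and the result collapses to
\[
\langle \Gamma^\theta_{\mu/\nu}, \phi\rangle \;=\; \sum_{w' \in S_n} K_{\theta,\widehat{w'}}\,\phi(w').
\]

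Second, I expand the immanant directly from its definition and extract the coefficient of $s_\theta$. Each product $\prod_i h_{\mu_i - \nu_{w(i)} + i - w(i)}$ appearing in $\phi[H_{\mu/\nu}]$ is of the form $h_\pi$ for $\pi$ the partition whose parts are the (non-negative) exponents; and since $h_\pi = \sum_\theta K_{\theta,\pi}\,s_\theta$, the coefficient of $s_\theta$ is
\[
[s_\theta]\,\phi[H_{\mu/\nu}] \;=\; \sum_{w \in S_n} \phi(w)\,K_{\theta,\,c(w)},
\]
where $c(w)_i := \mu_i - \nu_{w(i)} + i - w(i)$; crucially, $K_{\theta,c}$ depends only on the multi-set of entries in $c$, not their order.

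Third, I reconcile the two sums. Unpacking the definition $\widehat{w'} = \mu + \delta - w'(\nu + \delta)$ together with the action $w'(\nu + \delta)_i = (\nu+\delta)_{(w')^{-1}(i)}$, one reads off that the $i$-th entry of $\widehat{w'}$ is, up to a permutation of the coordinates, the same as $c((w')^{-1})_i$. Substituting $u = (w')^{-1}$ in the inner-product expression and using that $\phi(u) = \phi(u^{-1})$ (since $u$ and $u^{-1}$ share a cycle type) converts $\sum_{w'} K_{\theta,\widehat{w'}}\,\phi(w')$ into $\sum_u K_{\theta,c(u)}\,\phi(u)$, which is exactly the immanant expression above.

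The only delicate point is keeping the $w$ versus $w^{-1}$ conventions consistent across the three ingredients. This bookkeeping is entirely absorbed by the two invariance facts already noted: Kostka numbers depend only on the underlying multi-set of contents, and characters of $S_n$ are invariant under inversion.
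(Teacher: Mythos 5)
Your proof is correct and follows essentially the same route as the paper: expand the inner product to $\sum_{w} K_{\theta,\widehat{w}}\,\phi(w)$, expand the immanant and read off the coefficient of $s_\theta$ via $h_c = \sum_\theta K_{\theta,c}s_\theta$, and match the two sums. If anything, you are more explicit than the paper about the $w$ versus $w^{-1}$ bookkeeping (which the paper absorbs silently into the identification $\prod_i h_{\mu_i-\nu_{w(i)}+w(i)-i} = h_{\widehat{w}}$), and your justification via invariance of Kostka numbers under reordering and of characters under inversion is exactly the right one.
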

\begin{proof}
First, the coefficient of $s_\theta$ in the Schur expansion of $h_\lambda$ is the Kostka number $K_{\theta,\lambda}$. Let $\phi$ be a virtual character. The character inner product of $\Gamma^\theta_{\mu/\nu}$ with $\phi$ is
\begin{align*}
\left\langle \Gamma^\theta_{\mu/\nu}, \phi  \right\rangle&= \frac{1}{n!} \sum_{w \in S_n} \Gamma^\theta_{\mu/\nu}(w) \phi(w)\\
&= \frac{1}{n!} \sum_{w \in S_n} \left(\frac{n!}{\abs{C(w)}} \sum_{w' \in C(w)} K_{\theta, \widehat{w'}}\right) \phi(w)\\
&= \sum_{w \in S_n} K_{\theta, \widehat{w}} \phi(w).
\end{align*}
On the other hand, the immanant of $H_{\mu/\nu}$ with respect to $\phi$ is
\begin{align*}
\phi[H_{\mu/\nu}] &= \sum_{w\in S_n} \phi(w) \prod_{i=1}^n h_{\mu_i - \nu_{w(i)} + w(i) - i} \\
&= \sum_{w\in S_n} \phi(w) h_{\widehat{w}} \\
&= \sum_{w\in S_n} \phi(w) \sum_{\theta \vdash N} K_{\theta,\widehat{w}} s_\theta\\
&= \sum_{\theta \vdash N} \left( \sum_{w\in S_n} \phi(w)  K_{\theta,\widehat{w}}\right)s_\theta.
\end{align*}
This concludes the proof.
\end{proof}
Recall that $\eta^\lambda$ and $\phi^\lambda$ are dual bases, so $\langle \Gamma^\theta_{\mu/\nu}, \phi^\lambda\rangle$ is the coefficient of $\eta^\lambda$ in $\Gamma^\theta_{\mu/\nu}$. Thus Lemma \ref{lem:inner_prod} is the connection between Conjectures \ref{conj:Stem_G} and \ref{conj:Stem_imm} above. \par 
Theorem \ref{thm:ordinary_schur} states that ordinary immanants of Jacobi-Trudi matrices are Schur-positive, so $\left\langle \Gamma^\theta_{\mu/\nu}, \chi^\lambda  \right\rangle$ is a non-negative integer for all partitions $\lambda$. The irreducible characters form an orthonormal basis of the space of characters of $S_n$, so $\Gamma^\theta_{\mu/\nu}$ is an integer sum of irreducible characters, and in particular, $\Gamma^\theta_{\mu/\nu}$ is indeed a character. 

\subsection{Hessenberg functions} 
If $\theta = (N)$, then all of the Kostka numbers in equation (\ref{eqn:gamma}) are either $0$ or $1$ depending on whether or not $\mu+\delta-w(\nu+\delta)$ has any negative entries. Thus the character $\Gamma^{(N)}_{\mu/\nu}$ depends only on the pattern of zeros in the Jacobi-Trudi matrix $H_{\mu/\nu}$. \par 
The pattern of nonzero entries in a Jacobi-Trudi matrix corresponds to a combinatorial object called a Hessenberg function.
\begin{definition}
A \emph{Hessenberg function} is a weakly increasing function $h \colon [n] \to [n]$ such that $h(i) \geq i$ for all $i \in [n]$. Each such function is denoted by a vector, $(h(1),h(2),...,h(n))$.
\end{definition}
The Hessenberg function $h$ determined by the pattern of zeros in  $H_{\mu/\nu}$ is given by  \[h(j) = \max\{i \in [n] \mid \mu_i-\nu_j + j-i \geq 0\}.\]
Recall that the $i$-th row and $j$-th column of $H_{\mu/\nu}$ is $h_{\mu_i-\nu_j + j-i}$. Thus in regards to the matrix $H_{\mu/\nu}$, $h(j)$ is the row index of the last nonzero entry in the $j$-th column of $H_{\mu/\nu}$.
\begin{example}\label{ex:hess_fcn} If $n = 5$, $\mu = (3,2,2,1,1)$ and $\nu = \emptyset$, then
\[
H_{\mu/\nu} = \begin{bmatrix}
h_3 & h_4&h_5&h_7&h_8\\
h_1 & h_2&h_3&h_4&h_5\\
1 & h_1&h_2&h_3&h_4\\
0 & 0&1&h_1&h_3\\
0 & 0&0&1&h_1\\
\end{bmatrix}.
\]
There are three nonzero entries in the first two columns, four in the third column, and five in the fourth and fifth columns. So the associated Hessenberg function is $h = (3,3,4,5,5)$.
\end{example} 
\begin{lemma}\label{lem:pos}
Let $\mu/\nu$ be a skew shape. Then $\widehat{w^{-1}}$ has no negative entries if and only if $w(i) \leq h(i)$ for all $i \in [n]$.
\end{lemma}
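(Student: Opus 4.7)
The plan is to prove this by unpacking the vector $\widehat{w^{-1}} = \mu + \delta - w^{-1}(\nu + \delta)$ component by component, recognizing each entry as the subscript of a specific position in the Jacobi-Trudi matrix $H_{\mu/\nu}$, and then using the definition of the Hessenberg function $h$ to translate "nonnegative entry" into an inequality that can be reindexed to the desired form.

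First I would compute the $i$-th component. Using $\delta_i = n-i$ and unwinding how $w^{-1}$ permutes the sequence $\nu + \delta$, one obtains a formula of the shape
\[
\widehat{w^{-1}}_i \;=\; \mu_i - \nu_{w^{-1}(i)} + w^{-1}(i) - i,
\]
which is precisely the subscript appearing in the $(i, w^{-1}(i))$ entry of $H_{\mu/\nu}$. So the condition that $\widehat{w^{-1}}_i \geq 0$ is equivalent to saying that the $(i, w^{-1}(i))$ entry of the Jacobi-Trudi matrix is genuinely a homogeneous symmetric function (including $h_0 = 1$) rather than the zero entry coming from a negative subscript.

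Next, I would invoke the definition of the Hessenberg function $h$ directly. Because $\mu$ is weakly decreasing, the quantity $\mu_i - \nu_j + j - i$ is weakly decreasing in $i$ for each fixed column $j$, so the set of row indices giving nonzero entries in column $j$ is exactly $\{1, 2, \ldots, h(j)\}$. Hence entry $(i,j)$ of $H_{\mu/\nu}$ is nonzero if and only if $i \leq h(j)$. Combining with the previous paragraph, $\widehat{w^{-1}}$ has no negative entries if and only if $i \leq h(w^{-1}(i))$ for all $i \in [n]$.

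Finally, I would apply the change of variables $j = w^{-1}(i)$ (equivalently $i = w(j)$), which ranges over $[n]$ as $i$ does since $w$ is a bijection. Under this substitution the condition $i \leq h(w^{-1}(i))$ for all $i$ becomes $w(j) \leq h(j)$ for all $j$, completing the proof. The argument is essentially a bookkeeping exercise, and the only real obstacle is keeping track of the action conventions so that the inverse on $w$ in the hypothesis cancels against the inverse appearing in the action of $w^{-1}$ on $\nu + \delta$; once the component formula is written correctly, the rest is immediate.
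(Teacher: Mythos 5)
Your proof is correct and follows essentially the same route as the paper's: compute the entries of $\widehat{w^{-1}}$ as the subscripts $\mu_i - \nu_{w^{-1}(i)} + w^{-1}(i) - i$ appearing in the Jacobi--Trudi matrix, invoke the definition of $h$, and reindex along the bijection $w$. The only (welcome) difference is that you make explicit the monotonicity of $\mu_i - \nu_j + j - i$ in $i$, which justifies that nonzero entries in each column form an initial segment $\{1,\dots,h(j)\}$ --- a point the paper's proof uses implicitly.
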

\begin{proof}
Fix $i \in [n]$. Then 
\[\left(\widehat{w^{-1}}\right)_{w(i)} = (\mu + \delta - w^{-1}(\nu + \delta))_{w(i)} = \mu_{w(i)} - \nu_{i} + i - w(i).\]
By definition $h(i) = \max\{j \in [n] \mid \mu_j-\nu_i + i-j \geq 0\}$. So if $w(i) > h(i)$ then $(\mu + \delta - w^{-1}(\nu + \delta))_{w(i)} < 0$. Similarly, if $w(i) \leq h(i)$ then $(\mu + \delta - w^{-1}(\nu + \delta))_{w(i)} \geq 0$.\par 
Since $\{w(i) \mid i \in [n]\} = [n]$, this concludes the proof.
\end{proof} 
In the case of Example \ref{ex:hess_fcn}, the set of $w \in S_n$ such that $\widehat{w^{-1}}$ has no negative entries is $\{w \in S_5 \mid w(1),w(2) \leq 3 \;\text{ and, }\; w(3) \leq 4\}$. Given a Hessenberg function $h$, the indicator function $\widehat{h} \colon S_n \to \{0,1\}$ will denote whether or not $\mu+\delta-w^{-1}(\nu+\delta)$ has negative entries. By Lemma \ref{lem:pos}, 
\[
\widehat{h}(w) := \begin{cases} 1 & w(i) \leq h(i) \text{ for all } i \in [n] \\
 0 & \text{otherwise}.\end{cases}
\]

\begin{example}\label{ex:indicator}
Let $h=(3,3,4,4)$. The following diagrams depict each permutation matrix imposed over a diagrams with box $(i,j)$ shaded whenever $i \leq h(j)$.
\begin{center}
\begin{tabular}{ccc}
$1234$ & $3142$ & $3412$\\
$
\begin{ytableau}
*(blue!30)1 &*(light-gray)  &*(light-gray) &*(light-gray) \\
*(light-gray) &*(blue!30)1 &*(light-gray) &*(light-gray) \\
*(light-gray) &*(light-gray) &*(blue!30)1 &*(light-gray) \\
 & &*(light-gray) &*(blue!30)1
\end{ytableau}$
&
$
\begin{ytableau}
*(light-gray) &*(blue!30)1  &*(light-gray) &*(light-gray) \\
*(light-gray) &*(light-gray) &*(light-gray) &*(blue!30)1 \\
*(blue!30)1 &*(light-gray) &*(light-gray) &*(light-gray) \\
 & &*(blue!30)1 &*(light-gray)
\end{ytableau}$
&$
\begin{ytableau}
*(light-gray) &*(light-gray)  &*(blue!30)1 &*(light-gray) \\
*(light-gray) &*(light-gray) &*(light-gray) &*(blue!30)1 \\
*(blue!30)1 &*(light-gray) &*(light-gray) &*(light-gray) \\
 &1 &*(light-gray) &*(light-gray)
\end{ytableau}$
\end{tabular}
\end{center}
According to the pictures, we get $\widehat{h}(1234) = 1$, $\widehat{h}(3142) = 1$, and $\widehat{h}(3412) = 0$.
\end{example}
 
Lemma \ref{lem:kostka_hess} below allows us to compute $\Gamma^{(N)}_{\mu/\nu}$ using only the data of the Hessenberg function corresponding to $\mu/\nu$.
\begin{lemma}\label{lem:kostka_hess}
Let $\mu/\nu$ be a skew shape with corresponding Hessenberg function $h$. Then 
\[\Gamma^{(N)}_{\mu/\nu}(w) = \frac{n!}{\abs{C(w)}} \sum_{w' \in C(w)} \widehat{h}(w')\]

\end{lemma}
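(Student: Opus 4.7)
The proof plan is to unwind the definition of $\Gamma^{(N)}_{\mu/\nu}$ and show, term by term, that each Kostka number $K_{(N),\widehat{w'}}$ matches $\widehat{h}(w'')$ for a suitable $w''$ in the same conjugacy class. The single observation driving everything is that $(N)$ is a one-row shape, so a semistandard tableau of shape $(N)$ and content $c$ is just a weakly increasing word using exactly $c_i$ copies of $i$. Such a word exists (uniquely) precisely when every $c_i \geq 0$; otherwise there are no tableaux. Therefore
\[
K_{(N),c} = \begin{cases} 1 & c_i \geq 0 \text{ for all } i,\\ 0 & \text{otherwise.}\end{cases}
\]

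Next I would apply this to $c = \widehat{w'} = \mu+\delta - w'(\nu+\delta)$. Writing $u := (w')^{-1}$, we have $\widehat{w'} = \widehat{u^{-1}}$, so Lemma \ref{lem:pos} (applied to $u$) tells us that $\widehat{w'}$ has no negative entries if and only if $u(i) \leq h(i)$ for all $i \in [n]$, i.e.\ if and only if $\widehat{h}(u) = \widehat{h}((w')^{-1}) = 1$. Combining these two facts gives the pointwise identity
\[
K_{(N),\widehat{w'}} \;=\; \widehat{h}\bigl((w')^{-1}\bigr).
\]

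Plugging this into the definition of $\Gamma^{(N)}_{\mu/\nu}$ from \eqref{eqn:gamma} yields
\[
\Gamma^{(N)}_{\mu/\nu}(w) \;=\; \frac{n!}{|C(w)|}\sum_{w' \in C(w)} \widehat{h}\bigl((w')^{-1}\bigr).
\]
The final step is to reindex: the conjugacy class $C(w)$ is closed under inversion (a permutation and its inverse share a cycle type), so the map $w' \mapsto (w')^{-1}$ is a bijection of $C(w)$ to itself. Replacing $(w')^{-1}$ by $w''$ in the sum gives exactly the desired formula.

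No serious obstacle is expected here; the only thing to be careful about is the inverse bookkeeping, since Lemma \ref{lem:pos} is phrased in terms of $\widehat{w^{-1}}$ while the defining sum \eqref{eqn:gamma} is over $\widehat{w'}$. The closure of conjugacy classes under inversion is what makes that discrepancy disappear once one passes from a pointwise identity to the averaged sum.
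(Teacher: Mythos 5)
Your proof is correct and follows essentially the same route as the paper's: both reduce $K_{(N),c}$ to the indicator of $c$ being non-negative, invoke Lemma \ref{lem:pos} to translate non-negativity into the Hessenberg condition, and use closure of conjugacy classes under inversion to absorb the $w$ versus $w^{-1}$ discrepancy. Your version merely spells out the inverse bookkeeping that the paper's proof dispatches in one sentence.
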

\begin{proof}
The Kostka number $K_{(N),\widehat{w}}$ is $1$ whenever $\widehat{w}$ has only non-negative entries and $0$ otherwise. By Lemma \ref{lem:pos} this occurs precisely when $w^{-1}(i) \leq h(i)$ for all $i \in [n]$. Since $C(w)$ is closed under inverses, we may ignore the nuance of distinguishing $w$ and $w^{-1}$ in the sum for $\Gamma_h$.
\end{proof}
By Lemma \ref{lem:kostka_hess}, if $\mu/\nu$ is a skew shape with corresponding Hessenberg function $h$, we let $\Gamma_{\mu/\nu}^{(N)} = \Gamma_h$, where 
\[
\Gamma_h := \frac{n!}{\abs{C(w)}} \sum_{w' \in C(w)} \widehat{h}(w').
\]
The following example demonstrates how to compute a particular $\Gamma_h$.
\begin{example}
Let $h = (3,3,4,4)$. The following table lists each conjugacy class by the associated partition $\lambda$, the permutations $w$ in that conjugacy class such that $\widehat{h}(w) = 1$, and the value that $\Gamma_h$ takes on that conjugacy class.
\begin{center}
\begin{tabular}{c|c|c}
$\lambda$ & $\widehat{h}(w)=1$ & $\Gamma_h$ \\\hline
$(1,1,1,1)$ & $1234$ & $\frac{4!}{1} \cdot 1$  \\\hline
$(2,1,1)$ & $2134$ &  $\frac{4!}{6}\cdot 4$ \\
&$1324$&\\
&$1243$&\\
&$3214$\\\hline
$(2,2)$ & $2143$ & $\frac{4!}{3}\cdot 1$ \\\hline
$(3,1)$ & $2314$ & $\frac{4!}{8}\cdot 3$ \\
&$1342$&\\
&$3124$&\\\hline
$(4)$ & $2341$ & $\frac{4!}{6}\cdot 3$  \\
&$3142$&\\
&$3241$&
\end{tabular}
\end{center}
For example, the cycle type of the permutation $4231$ is $(2,1,1)$, so $\Gamma_h(4231) = \frac{4!}{6}\cdot 4 = 16$.
\end{example}

\section{Computational Simplifications}\label{sec:simp}
In this section, we give several simplifications for computing $\Gamma^{\theta}_{\mu/\nu}$. The methods of proof are technical representation-theoretic computations and are not used in the other sections of our paper. As such, the proofs are delayed to Appendix A.\par 
\ytableausetup{smalltableaux}
Let $\mu = (5,4,2,2,1)$ and $\nu = (3,2,2)$, and $\widehat{\mu} = (5,4,2,1)$ and $\widehat{\nu} = (3,2)$. The associated diagrams are as follows.
\begin{center}
\begin{tabular}{ccc}
$\ydiagram{3+2,2+2,0,2,1}$&\hspace{1cm} & $\ydiagram{3+2,2+2,2,1}$
\end{tabular}
\end{center}
Note that the skew shape  skew shape $\widehat{\mu}/\widehat{\mu}$ is simply the skew shape $\mu/\nu$ with the empty third row removed.\par
Proposition \ref{prop:no_zeros} below asserts that the immanant characters $\Gamma^\theta_{\mu/\nu}$ and $\Gamma^\theta_{\widehat{\mu}/\widehat{\nu}}$ are equal. In particular, it tells us that to compute $\Gamma_{\mu/\nu}^\theta$ it suffices consider skew shapes without empty (zero) rows in the middle.
\begin{proposition}\label{prop:no_zeros}
Suppose $\mu/\nu$ is a skew shape such that $\mu_i = \nu_i$ for some $i \in [n]$, where $n \geq \ell(\mu/\nu)$. Let $\widehat{\mu}$ and $\widehat{\nu}$ denote, respectively, the partitions $\mu$ and $\nu$ with their $i$-th components removed. Then $\Gamma^\theta_{\mu/\nu}(w) = \Gamma^\theta_{\widehat{\mu}/\widehat{\nu}}(w)$
for all $w \in S_n$ and $\theta \vdash N$.
\end{proposition}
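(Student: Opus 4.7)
The plan is to prove the proposition by constructing a bijection on $S_n$ that preserves cycle type and matches Kostka numbers term-by-term; the bijection is conjugation by the cyclic shift $\sigma = (i, i+1, \ldots, n) \in S_n$. The key enabling observation is a Hessenberg lemma: since $\mu_i = \nu_i$ and $\nu_j \geq \nu_i$ for $j < i$, the Hessenberg function $h = h^{\mu/\nu}$ associated to the skew shape satisfies $h(j) < i$ for all $j < i$ and $h(i) = i$. Combined with Lemma~\ref{lem:pos}, this forces any $w' \in S_n$ with $K_{\theta, \widehat{w'}^{\mu/\nu}} \neq 0$ to fix $i$ and to restrict to a permutation on each of $[1, i-1]$ and $[i+1, n]$. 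Padding $\widehat{\mu}$ and $\widehat{\nu}$ to length $n$ places the unique empty row of $\widehat{\mu}/\widehat{\nu}$ at position $n$, so the identical argument forces any $\bar w' \in S_n$ contributing to $\Gamma^\theta_{\widehat{\mu}/\widehat{\nu}}(w)$ to fix $n$ and to restrict to permutations of $[1, i-1]$ and $[i, n-1]$.

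Define $\Psi \colon S_n \to S_n$ by $\Psi(w') = \sigma^{-1} w' \sigma$; as a conjugation this is a cycle-type-preserving bijection. A direct unpacking yields the explicit formulas $\Psi(w')(j) = w'(j)$ for $j < i$, $\Psi(w')(j) = w'(j+1) - 1$ for $i \leq j \leq n-1$, and $\Psi(w')(n) = n$, showing that $\Psi$ maps permutations of the first form bijectively to permutations of the second. The technical heart of the proof is an entry-by-entry comparison of the content vectors. Using the relations $(\widehat{\mu}+\delta)_j = (\mu+\delta)_j$ for $j < i$, $(\widehat{\mu}+\delta)_j = (\mu+\delta)_{j+1}+1$ for $i \leq j \leq n-1$, and $(\widehat{\mu}+\delta)_n = 0$, together with the analogous identities for $\widehat{\nu}+\delta$, one finds that the $+1$ shifts cancel in the differences, giving
\[
\widehat{\Psi(w')}^{\widehat{\mu}/\widehat{\nu}} = \bigl(\widehat{w'}_1^{\mu/\nu}, \ldots, \widehat{w'}_{i-1}^{\mu/\nu}, \widehat{w'}_{i+1}^{\mu/\nu}, \ldots, \widehat{w'}_n^{\mu/\nu}, \widehat{w'}_i^{\mu/\nu}\bigr),
\]
a cyclic rearrangement of $\widehat{w'}^{\mu/\nu}$. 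Since Kostka numbers depend only on the multiset of their content argument, $K_{\theta, \widehat{w'}^{\mu/\nu}} = K_{\theta, \widehat{\Psi(w')}^{\widehat{\mu}/\widehat{\nu}}}$, and summing this equality over a conjugacy class via $\Psi$ (non-contributing $w'$ automatically adding zero on both sides) yields the proposition.

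The main obstacle is identifying the correct conjugating element. The natural guess of the transposition $(i, n)$ fails --- a direct computation in small examples shows that it does not match individual Kostka values, even though the class sums still work out --- and only the cyclic shift $\sigma = (i, i+1, \ldots, n)$ produces the clean term-by-term cancellation making the content vectors agree as multisets. Once $\sigma$ has been identified, the remaining verification is a routine case split on whether $j$ lies in $[1, i-1]$, $\{i\}$, $[i+1, n-1]$, or $\{n\}$.
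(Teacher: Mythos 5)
Your proposal is correct and follows essentially the same route as the paper: both restrict to permutations fixing row $i$ (you via the Hessenberg function and Lemma~\ref{lem:pos}, the paper via its Lemma on disconnected shapes), and both then conjugate by the same cycle --- your $\sigma=(i,i+1,\ldots,n)$ is the inverse of the paper's $c_i=(n,n-1,\ldots,i)$, so $\Psi(w')=\sigma^{-1}w'\sigma=c_iw'c_i^{-1}$ --- and verify entry-by-entry that the content vectors agree up to rearrangement, forcing equal Kostka numbers term-by-term over each conjugacy class.
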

Appending empty rows to a skew shape $\mu/\nu$ allows one to consider $\Gamma^\theta_{\mu/\nu}$ as a character of a symmetric group on more letters than $\mu/\nu$ has nonzero rows. Proposition \ref{prop:min_n} below confirms that this process does not meaningfully alter the character.
\begin{proposition}\label{prop:min_n}
Let $\mu/\nu$ be a skew shape of length at most $n-1$. If $\Gamma_{\mu/\nu}^\theta = \sum_i \Gamma_{\mu_i/\nu_i}^{(N)}$ as characters in $S_{n-1}$, then $\Gamma_{\mu/\nu}^\theta = \sum_i \Gamma_{\mu_i/\nu_i}^{(N)}$ in $S_{n}$. In particular, if Conjecture \ref{conj:main_char} is true for characters $\Gamma^{\theta}_{\mu/\nu}$ of $S_{\ell(\mu/\nu)}$ then it is true in general.
\end{proposition}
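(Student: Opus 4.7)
The plan is to reduce the lift from $S_{n-1}$ to $S_n$ to a single induction-product identity: for every skew shape $\sigma/\tau$ of length at most $n-1$ and every partition $\theta\vdash \abs{\sigma/\tau}$,
\begin{equation*}
\Gamma^\theta_{\sigma/\tau}\text{ as a character of }S_n \;=\; \bigl(\Gamma^\theta_{\sigma/\tau}\text{ as a character of }S_{n-1}\bigr)\circ \mathbf{1}_{S_1},
\end{equation*}
where $\circ$ is the induction product of Subsection 2.1 and $\mathbf{1}_{S_1}$ is the trivial character of $S_1$; call this identity $(\ast)$. Granting $(\ast)$, both $\Gamma^\theta_{\mu/\nu}$ and each $\Gamma^{(N)}_{\mu_i/\nu_i}$ are transported from $S_{n-1}$ to $S_n$ by the same $\Z$-linear operation $-\circ\mathbf{1}_{S_1}$, so the hypothesized decomposition in $S_{n-1}$ lifts term-by-term by additivity of $\circ$. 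Iterating $(\ast)$ in $n$ then reduces Conjecture \ref{conj:main_char} to the case $n=\len(\mu/\nu)$, which is the second assertion of the proposition.

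To prove $(\ast)$ I exploit the pattern of zeros in the Jacobi-Trudi matrix. Because $\len(\sigma/\tau)\le n-1$, we have $\sigma_n=\tau_n=0$, so each entry $h_{\sigma_n-\tau_j+j-n}$ in the $n$-th row of the $n\times n$ matrix $H_{\sigma/\tau}$ vanishes for $j<n$, while the $(n,n)$-entry equals $h_0=1$. Cofactor expansion along this row yields, for every virtual character $\phi$ of $S_n$,
\begin{equation*}
\phi[H_{\sigma/\tau}] \;=\; (\phi|_{S_{n-1}})[H'_{\sigma/\tau}],
\end{equation*}
where $H'_{\sigma/\tau}$ is the $(n-1)\times(n-1)$ Jacobi-Trudi matrix for $\sigma/\tau$ over $S_{n-1}$ (it equals the upper-left block, since $h_{\sigma_i-\tau_j+j-i}$ is independent of the ambient $n$), and $S_{n-1}$ is identified with the stabilizer of $n$ in $S_n$. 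Applying Lemma \ref{lem:inner_prod} to both sides of this equation gives
\begin{equation*}
\langle \Gamma^\theta_{\sigma/\tau},\phi\rangle_{S_n}\;=\;\langle \Gamma^\theta_{\sigma/\tau},\phi|_{S_{n-1}}\rangle_{S_{n-1}},
\end{equation*}
and Frobenius reciprocity for the embedding $S_{n-1}\times S_1\hookrightarrow S_n$ rewrites the right-hand side as $\langle \Gamma^\theta_{\sigma/\tau}\circ\mathbf{1}_{S_1},\phi\rangle_{S_n}$. Since $\phi$ is an arbitrary virtual character of $S_n$, non-degeneracy of the character inner product yields $(\ast)$.

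The main obstacle is organizational rather than conceptual: one must verify that the residual $(n-1)\times(n-1)$ matrix produced by cofactor expansion really is the Jacobi-Trudi matrix for $\sigma/\tau$ viewed over $S_{n-1}$, and that the restriction of $\phi$ from $S_n$ to the copy of $S_{n-1}\times S_1$ fixing $n$ behaves as expected under Frobenius reciprocity. Both checks are routine once the indexing conventions are pinned down, and once $(\ast)$ is in hand the rest of the proposition follows by a single line of additivity.
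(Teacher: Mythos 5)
Your argument is correct and arrives at the same pivotal identity as the paper --- that $\Gamma^\theta_{\mu/\nu}$ viewed in $S_n$ is the character induced from $\Gamma^\theta_{\mu/\nu}$ viewed in $S_{n-1}$ --- but by a genuinely different route. The paper proves this identity by direct computation: it uses Lemma \ref{lem:disconnected} to show both sides vanish when $C_n(w)\cap S_{n-1}=\emptyset$, and otherwise unwinds the induced-character formula, matching $\abs{C_{n-1}(w)}$, $\abs{Z_n(w)}$, and the observation that $\delta^{n}-w'(\delta^{n})$ agrees with $\delta^{n-1}-w'(\delta^{n-1})$ term by term against the definition of $\Gamma^\theta_{\mu/\nu}$. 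You instead note that the $n$-th row of $H_{\mu/\nu}$ is $(0,\dots,0,1)$, so the immanant sum over $S_n$ collapses onto the stabilizer of $n$, and then transport this through Lemma \ref{lem:inner_prod} and Frobenius reciprocity, concluding by non-degeneracy of the inner product on class functions. This buys a proof with no centralizer bookkeeping, at the cost of invoking Lemma \ref{lem:inner_prod} twice (for $n$ and for $n-1$) together with the duality machinery, whereas the paper's computation is self-contained at the level of the definition of $\Gamma^\theta_{\mu/\nu}$. One small correction: the hypothesis $\len(\mu/\nu)\le n-1$ gives only $\mu_n=\nu_n$, not $\mu_n=\nu_n=0$; your vanishing claim for the entries $h_{\mu_n-\nu_j+j-n}$ with $j<n$ still holds, because $\nu_j\ge\nu_n=\mu_n$ together with $j-n<0$ forces the subscript to be negative, but the justification should be phrased that way.
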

By Proposition \ref{prop:min_n}, we may always assume that $\Gamma_{\mu/\nu}^\theta$ is a character of $S_n$ where $n$ is the number of nonempty rows in the skew diagram.\par 
Consider the skew shapes $\mu^0/\nu^0 = (5,4,2,1)/(3,2)$ and $\mu^1/\nu^1 = (5,4,3,2)/(3,3,1)$. The respective diagrams are as follows.
\begin{center}
\begin{tabular}{ccc}
$\ydiagram{3+2,2+2,2,1}$ & \hspace{1cm}&  $\ydiagram{3+2,3+1,1+2,2}$
\end{tabular}
\end{center}
From these diagrams, we see that we may swapped the order of the connected components of $\mu^0/\nu^0$ to obtain $\mu^1/\nu^1$. Proposition \ref{prop:connected_order} below asserts that the immanant characters $\Gamma^\theta_{\mu^0/\nu^0}$ and  $\Gamma^\theta_{\mu^1/\nu^1}$ are equal.
\begin{proposition}\label{prop:connected_order}
Suppose $\mu^0/\nu^0$ and $\mu^1/\nu^1$ are skew shapes whose skew diagrams have identical connected components. Then $\Gamma^\theta_{\mu^0/\nu^0} = \Gamma^\theta_{\mu^1/\nu^1}$.
\end{proposition}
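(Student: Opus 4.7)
The plan is to exploit the block upper-triangular structure that $H_{\mu/\nu}$ inherits from a disconnected skew shape, then observe that immanants with respect to virtual characters are insensitive both to the strictly upper-triangular entries and to the ordering of the diagonal blocks. I would then invoke Lemma~\ref{lem:inner_prod} to translate this into the desired identity of immanant characters.

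To begin, let $C_1, \ldots, C_k$ denote the connected components of $\mu/\nu$, arranged vertically so that $C_j$ occupies rows $[a_j, b_j]$ with $a_{j+1} = b_j + 1$. Disconnectedness of $C_j$ from $C_{j+1}$ is equivalent to $\mu_{a_{j+1}} \leq \nu_{b_j}$, from which monotonicity of $\mu$ and $\nu$ yields $\mu_i - \nu_{j'} + j' - i \leq -1$ whenever $i$ lies strictly below the block containing $j'$. Hence $H_{\mu/\nu}$ is block upper-triangular, and since Jacobi-Trudi entries depend only on the differences $\mu_i - \nu_j$, the $j$-th diagonal block is exactly the Jacobi-Trudi matrix $H_{C_j}$ of the component $C_j$ viewed as a skew shape in its own right.

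Next, for any virtual character $\phi$ of $S_n$, the zeros in $H_{\mu/\nu}$ force the only contributing permutations in $\phi[H_{\mu/\nu}]$ to lie in the image of the block embedding $\iota^{(0)} \colon S_{r_1} \times \cdots \times S_{r_k} \hookrightarrow S_n$ (where $r_j = b_j - a_j + 1$ and the $j$-th factor acts on the rows occupied by $C_j$). This gives
\[
\phi[H_{\mu/\nu}] = \sum_{(w_1, \ldots, w_k)} \phi\bigl(\iota^{(0)}(w_1, \ldots, w_k)\bigr) \prod_{j=1}^k \prod_{i} (H_{C_j})_{i, w_j(i)},
\]
and the strictly upper-triangular entries of $H_{\mu/\nu}$ never appear. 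For another skew shape $\mu^1/\nu^1$ with the same connected components in a different order, the same analysis produces a sum involving a different embedding $\iota^{(1)}$; however, $\iota^{(t)}(w_1, \ldots, w_k)$ has cycle type equal to the multiset union of the cycle types of the $w_j$, independent of $t$. Since $\phi$ is a class function, $\phi\bigl(\iota^{(0)}(w_1, \ldots, w_k)\bigr) = \phi\bigl(\iota^{(1)}(w_1, \ldots, w_k)\bigr)$, so the two sums agree termwise.

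Finally, having shown $\phi[H_{\mu^0/\nu^0}] = \phi[H_{\mu^1/\nu^1}]$ for every virtual character $\phi$, Lemma~\ref{lem:inner_prod} gives $\langle \Gamma^\theta_{\mu^0/\nu^0}, \phi \rangle = \langle \Gamma^\theta_{\mu^1/\nu^1}, \phi \rangle$ for all $\phi$, and non-degeneracy of the character inner product then yields $\Gamma^\theta_{\mu^0/\nu^0} = \Gamma^\theta_{\mu^1/\nu^1}$. The main obstacle will be the bookkeeping in the first step---verifying the block structure and identifying each diagonal block as $H_{C_j}$---but this reduces to the disconnection inequality $\mu_{a_{j+1}} \leq \nu_{b_j}$ together with the translation-invariance of Jacobi-Trudi entries under $(\mu, \nu) \mapsto (\mu - c, \nu - c)$.
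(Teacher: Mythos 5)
Your argument is correct, and it reaches the conclusion by a genuinely different route than the paper. The paper works entirely on the character side: it constructs an explicit row-permuting element $\sigma \in S_n$ (order-preserving within each connected component), restricts to the Young subgroup via Lemma \ref{lem:disconnected}, and verifies by direct computation with $\delta$ that $\mu^0+\delta-w'(\nu^0+\delta)$ and $\mu^1+\delta-\sigma w'\sigma^{-1}(\nu^1+\delta)$ are rearrangements of one another, so the Kostka numbers in the defining sum for $\Gamma^\theta$ match term by term under the conjugacy-class bijection $w'\mapsto \sigma w'\sigma^{-1}$. You instead prove the a priori stronger statement that $\phi[H_{\mu^0/\nu^0}]=\phi[H_{\mu^1/\nu^1}]$ for \emph{every} virtual character $\phi$ (hence all Schur coefficients agree simultaneously), using the block upper-triangular structure of the Jacobi--Trudi matrix, and then dualize through Lemma \ref{lem:inner_prod} and non-degeneracy of the character inner product. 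Your block-triangularity step is the same content as the paper's Lemma \ref{lem:disconnected}, and your ``the two sums agree termwise'' step secretly contains the same reindexing bijection as the paper's conjugation by $\sigma$ (you should make the matching of tuples $(w_1,\dots,w_k)$ across the two embeddings explicit, and note that both shapes have the same number of nonempty rows so both characters live on the same $S_n$); but packaging the argument as an identity of immanants lets you avoid the explicit $\delta$-bookkeeping, at the cost of invoking the duality lemma rather than arguing directly from the definition of $\Gamma^\theta_{\mu/\nu}$.
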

Consider $\mu/\nu = (5,4,2,1)/(3,2)$, whose diagram appears above. The connected components of $\mu/\nu$ are $(3,2)/(1)$ and $(2,1)$. The following proposition allows one to compute the $\mu/\nu$ Stanley-Stembridge character of $S_4$ from the $(3,2)/(1)$ and $(2,1)$ Stanley-Stembridge characters both of $S_2$, and is due to Stanley and Stembridge. 
\begin{proposition}\label{prop:(N)_products}\cite[\S 5]{Stan_Stem93}
Let $\mu/\nu$ be a disconnected skew shape with components $\mu^0/\nu^0$ and $\mu^1/\nu^1$. Let $N_i=\abs{\mu^i/\nu^i}$. Then
\[
\Gamma^{(N)}_{\mu/\nu} = \Gamma^{(N_0)}_{\mu^0/\nu^0} \circ \Gamma^{(N_1)}_{\mu^1/\nu^1} .
\]
\end{proposition}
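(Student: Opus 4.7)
The plan is to reduce to the Hessenberg-function formulation of Lemma \ref{lem:kostka_hess} and then verify the identity of characters by pairing both sides with an arbitrary class function. First, by Proposition \ref{prop:connected_order}, we may relabel the components so that $\mu^0/\nu^0$ occupies the top $n_0$ rows and $\mu^1/\nu^1$ the bottom $n_1$ rows of the skew diagram. In this arrangement, the Jacobi–Trudi matrix $H_{\mu/\nu}$ is block upper triangular with diagonal blocks $H_{\mu^0/\nu^0}$ and $H_{\mu^1/\nu^1}$, so the associated Hessenberg function $h$ of $\mu/\nu$ satisfies $h(i) = h_0(i)$ for $i \leq n_0$ and $h(n_0 + j) = n_0 + h_1(j)$ for $1 \leq j \leq n_1$, where $h_0,h_1$ are the Hessenberg functions of the two components.

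Next I would establish the factorization of the indicator $\widehat{h}$. From $h_0(i) \leq n_0$ for $i \leq n_0$ and the definition of $\widehat{h}$, a permutation $w \in S_n$ with $\widehat{h}(w) = 1$ must send $[n_0]$ into $[n_0]$, hence lies in the Young subgroup $S_{n_0} \times S_{n_1}$. Writing such a $w$ as $w_0 w_1$ with $w_0 \in S_{n_0}$ and $w_1 \in S_{n_1}$ (where $S_{n_1}$ permutes $\{n_0{+}1,\ldots,n\}$), the block form of $h$ gives
\[
\widehat{h}(w) \;=\; \widehat{h_0}(w_0)\,\widehat{h_1}(w_1),
\]
and $\widehat{h}(w) = 0$ otherwise.

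To compare characters I would use the following consequence of Lemma \ref{lem:kostka_hess}: for any class function $\psi$ on $S_n$, a direct computation (swapping the order of summation over conjugacy classes and representatives) yields
\[
\bigl\langle \Gamma_h, \psi \bigr\rangle_{S_n} \;=\; \sum_{w \in S_n} \widehat{h}(w)\,\psi(w).
\]
Applying this to $\Gamma_{h_0}$ and $\Gamma_{h_1}$, and using Frobenius reciprocity for the induction product together with the fact that for fixed $w_1 \in S_{n_1}$ the map $w_0 \mapsto \psi(w_0 w_1)$ is a class function on $S_{n_0}$ (and symmetrically), I would compute
\[
\bigl\langle \Gamma^{(N_0)}_{\mu^0/\nu^0} \circ \Gamma^{(N_1)}_{\mu^1/\nu^1},\, \psi \bigr\rangle_{S_n}
\;=\; \sum_{\substack{w_0 \in S_{n_0} \\ w_1 \in S_{n_1}}} \widehat{h_0}(w_0)\widehat{h_1}(w_1)\,\psi(w_0 w_1),
\]
and by the factorization above this equals $\langle \Gamma^{(N)}_{\mu/\nu}, \psi \rangle$. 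Since $\psi$ was arbitrary, the two characters coincide.

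The only real obstacle is the first step: carefully checking that after the reordering of components allowed by Proposition \ref{prop:connected_order}, the Hessenberg function genuinely has the claimed block form. Everything afterwards is a short manipulation of inner products, so the conceptual content of the proof lies in recognizing that disconnected skew shapes correspond precisely to Hessenberg functions with a block decomposition, at which point the induction-product identity becomes formal.
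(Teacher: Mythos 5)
Your argument is correct, but it is not the route the paper takes. The paper cites this proposition to Stanley--Stembridge and instead proves the generalization to arbitrary $\theta$ (Proposition \ref{prop:disconnect}) in Appendix A, by expanding the induced character from Definition \ref{def:induced}, decomposing $C_n(w)\cap(S_{n_0}\times S_{n_1})$ into products of conjugacy classes of the factors, and invoking two Kostka-number identities (the row-cut factorization $K_{\theta,c}=\sum_{\lambda}K_{\lambda,c'}K_{\theta/\lambda,c''}$ and the Littlewood--Richardson expansion of skew Kostka numbers); the present statement then falls out by taking $\theta=(N)$, where the only surviving terms are $\lambda=(N_0)$ and $\sigma=(N_1)$. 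Your proof instead exploits the fact that for $\theta=(N)$ the character sees only the zero pattern of $H_{\mu/\nu}$ (Lemma \ref{lem:kostka_hess}), reduces everything to the indicator $\widehat{h}$, and closes with Frobenius reciprocity plus nondegeneracy of the character inner product. This is cleaner and avoids all Kostka and Littlewood--Richardson bookkeeping, but it is specific to $\theta=(N)$ and does not extend to the general immanant character, which is what the paper ultimately needs. Two small points to tighten: (i) the block-form claim you flag as the real obstacle is a one-line check --- for $i > n_0 \geq j$ one has $\mu_i - \nu_j + j - i \leq \mu_{n_0+1}-\nu_{n_0} + j - i < 0$, which is precisely the computation in the paper's Lemma \ref{lem:disconnected} --- so the lower-left block of $H_{\mu/\nu}$ vanishes, $h(i)\le n_0$ for $i\le n_0$, and the factorization of $\widehat{h}$ follows; (ii) when you iterate the identity $\langle\Gamma_h,\psi\rangle=\sum_{w}\widehat{h}(w)\psi(w)$ inside the double sum, you should record that $w_1\mapsto\sum_{w_0}\widehat{h_0}(w_0)\psi(w_0w_1)$ is again a class function on $S_{n_1}$ (conjugation by $S_{n_1}$ commutes with $S_{n_0}$), which is what licenses the second application.
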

Proposition \ref{prop:(N)_products} can be generalized to all immanant characters. For example Proposition \ref{prop:disconnect} below allows one to compute the immanant character $\Gamma_{\mu/\nu}^\theta$ of $S_4$ using immanant characters of the skew shapes $(3,2)/(1)$ and $(2,1)$. In general, it allows one to compute $\Gamma_{\mu/\nu}^{\theta}$ from the immanant characters of the connected components of $\mu/\nu$.
\begin{proposition}\label{prop:disconnect}
Let $\mu/\nu$ be a disconnected skew shape with components $\mu^0/\nu^0$ and $\mu^1/\nu^1$. Let $N_i=\abs{\mu^i/\nu^i}$. Then
\[
\Gamma^{\theta}_{\mu/\nu} =\sum_{\substack{ \lambda \vdash N_0\\\lambda <\theta}} \sum_{\sigma\vdash N_1} c^\theta_{\lambda\sigma}\Gamma^{\lambda}_{\mu^0/\nu^0} \circ \Gamma^{\sigma}_{\mu^1/\nu^1},
\]
Where $c^{\theta}_{\lambda\sigma}$ is the Littlewood-Richardson coefficient.
\end{proposition}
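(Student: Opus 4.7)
The plan is to prove the identity by testing both sides against an arbitrary virtual character $\phi$ of $S_n$ via Lemma \ref{lem:inner_prod}, reducing the statement to a symmetric-function identity about the immanant $\phi[H_{\mu/\nu}]$. Since the character inner product is non-degenerate, establishing $\langle \Gamma^\theta_{\mu/\nu},\phi\rangle = \sum c^\theta_{\lambda\sigma}\langle \Gamma^\lambda_{\mu^0/\nu^0}\circ \Gamma^\sigma_{\mu^1/\nu^1},\phi\rangle$ for all $\phi$ will suffice.

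The first key step is to observe that when $\mu/\nu$ disconnects, with the top component occupying rows $1,\ldots,k$ and the bottom component rows $k+1,\ldots,n$, disconnection forces $\mu_{k+1}\le \nu_k$. Combined with the weak decrease of $\mu$ and $\nu$, this gives $\mu_i-\nu_j+j-i<0$ whenever $i>k$ and $j\le k$, so $H_{\mu/\nu}$ is block upper triangular with diagonal blocks equal to $H_{\mu^0/\nu^0}$ and $H_{\mu^1/\nu^1}$. Consequently, in the immanant $\phi[H_{\mu/\nu}]=\sum_{w\in S_n}\phi(w)\prod_i H_{i,w(i)}$, only $w\in S_{n_0}\times S_{n_1}$ contribute a nonzero product. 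Expanding $\phi\!\downarrow_{S_{n_0}\times S_{n_1}}=\sum_{\alpha,\beta} c_{\alpha\beta}(\chi^\alpha\otimes \chi^\beta)$, where $c_{\alpha\beta}=\langle \phi,\chi^\alpha\circ\chi^\beta\rangle$ by Frobenius reciprocity, the product over blocks factors and yields
\[
\phi[H_{\mu/\nu}]=\sum_{\alpha,\beta}\langle \phi,\chi^\alpha\circ\chi^\beta\rangle \, \chi^\alpha[H_{\mu^0/\nu^0}]\cdot \chi^\beta[H_{\mu^1/\nu^1}].
\]

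The second step is to extract $[s_\theta]$ from both sides. The left side is $\langle \Gamma^\theta_{\mu/\nu},\phi\rangle$ by Lemma \ref{lem:inner_prod}. For the right side, apply Lemma \ref{lem:inner_prod} to each factor to write $\chi^\alpha[H_{\mu^0/\nu^0}]=\sum_\lambda \langle \Gamma^\lambda_{\mu^0/\nu^0},\chi^\alpha\rangle\, s_\lambda$ and similarly for $\chi^\beta$; then invoke the Littlewood--Richardson rule $s_\lambda s_\sigma=\sum_\theta c^\theta_{\lambda\sigma}s_\theta$. Regrouping the sums and using bilinearity of the induction product together with the expansions $\Gamma^\lambda_{\mu^0/\nu^0}=\sum_\alpha \langle \Gamma^\lambda_{\mu^0/\nu^0},\chi^\alpha\rangle\, \chi^\alpha$ and similarly for $\Gamma^\sigma_{\mu^1/\nu^1}$ (recall $\Gamma^\theta_{\mu/\nu}$ is itself a character), the right side collapses to $\sum_{\lambda,\sigma}c^\theta_{\lambda\sigma}\langle \Gamma^\lambda_{\mu^0/\nu^0}\circ \Gamma^\sigma_{\mu^1/\nu^1},\phi\rangle$. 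The support condition $\lambda<\theta$ comes automatically from the vanishing of $c^\theta_{\lambda\sigma}$ unless $\lambda\subseteq \theta$.

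The main obstacle is verifying the block-triangular reduction of $H_{\mu/\nu}$ and the induced factorization of the immanant; once that geometry is nailed down, the rest is a formal chain of Frobenius reciprocity, Lemma \ref{lem:inner_prod}, and the Littlewood--Richardson rule. This argument is a natural generalization of the proof of Proposition \ref{prop:(N)_products}: the latter handles the case $\theta=(N)$ using the trivial character, while we instead expand $\phi$ against the full basis $\{\chi^\alpha\circ\chi^\beta\}$ and track how the Schur coefficient of $s_\theta$ is recombined via $c^\theta_{\lambda\sigma}$.
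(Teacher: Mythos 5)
Your proof is correct, but it takes a genuinely different route from the paper's. The paper works entirely at the level of characters and Kostka numbers: it expands the induced character $\Gamma^{\lambda}_{\mu^0/\nu^0}\circ\Gamma^{\sigma}_{\mu^1/\nu^1}$ from the definition, decomposes $C_n(w)\cap(S_{n_0}\times S_{n_1})$ into products of conjugacy classes, and then collapses the double sum using the two Kostka identities $K_{\theta,c}=\sum_{\lambda}K_{\lambda,c'}K_{\theta/\lambda,c''}$ and $K_{\theta/\lambda,c}=\sum_{\sigma}c^{\theta}_{\lambda\sigma}K_{\sigma,c}$, with Lemma \ref{lem:disconnected} discarding the permutations outside the Young subgroup. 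You instead dualize: you pair both sides against an arbitrary virtual character $\phi$, use Lemma \ref{lem:inner_prod} to convert the claim into the statement that $[s_\theta]\,\phi[H_{\mu/\nu}]=\sum_{\lambda,\sigma}c^{\theta}_{\lambda\sigma}\langle\Gamma^{\lambda}_{\mu^0/\nu^0}\circ\Gamma^{\sigma}_{\mu^1/\nu^1},\phi\rangle$, and obtain the factorization $\phi[H_{\mu/\nu}]=\sum_{\alpha,\beta}\langle\phi,\chi^{\alpha}\circ\chi^{\beta}\rangle\,\chi^{\alpha}[H_{\mu^0/\nu^0}]\cdot\chi^{\beta}[H_{\mu^1/\nu^1}]$ from the block-triangularity of the Jacobi--Trudi matrix (which plays exactly the role of Lemma \ref{lem:disconnected}) together with Frobenius reciprocity; the Littlewood--Richardson coefficients then enter through $s_\lambda s_\sigma=\sum_\theta c^{\theta}_{\lambda\sigma}s_\theta$ rather than through skew Kostka numbers. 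Your argument buys a cleaner, more conceptual derivation that avoids the conjugacy-class bookkeeping, at the cost of relying on non-degeneracy of the character pairing and on Lemma \ref{lem:inner_prod} applied simultaneously in $S_{n_0}$, $S_{n_1}$, and $S_n$; the paper's computation is heavier but stays entirely within the combinatorial definition of $\Gamma^\theta_{\mu/\nu}$. The two are equivalent in substance, and the supporting details you would still need to write out (that the lower-left block of $H_{\mu/\nu}$ vanishes when $\mu_{k+1}\le\nu_k$, and that the degree bookkeeping makes $[s_\theta]$ of the product well defined) are all straightforward.
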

We summarize the consequences of the above computational reductions to Conjectures \ref{conj:Stem_imm} and \ref{conj:main_char} in Corollary \ref{cor:comp_reduct} below.
\begin{corollary}\label{cor:comp_reduct}
If Conjectures \ref{conj:Stem_imm} and \ref{conj:main_char} hold for skew shapes $\mu/\nu$ and $n$ such that $\mu/\nu$ is connected and $n$ is the length of $\mu/\nu$, then they hold in full generality.
\end{corollary}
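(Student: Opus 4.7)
The plan is to chain together the four computational reductions of Section \ref{sec:simp}. Let $\mu/\nu$ be an arbitrary skew shape, fix $n \geq \ell(\mu/\nu)$, and let $\theta \vdash |\mu/\nu|$. First, I would iteratively invoke Proposition \ref{prop:no_zeros} to remove each empty row ($\mu_i = \nu_i$), reducing to a skew shape $\widetilde{\mu}/\widetilde{\nu}$ with no empty rows, and then invoke Proposition \ref{prop:min_n} to descend to $n = \ell(\widetilde{\mu}/\widetilde{\nu})$. After these steps we may assume $\mu/\nu$ has no empty rows and $n = \ell(\mu/\nu)$.

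Next, I would induct on the number $k$ of connected components of $\mu/\nu$. The base case $k=1$ is exactly the hypothesis of the corollary. For $k \geq 2$, use Proposition \ref{prop:connected_order} to reorder the components so that a single connected component $\mu^1/\nu^1$ is isolated, leaving $\mu^0/\nu^0$ with $k-1$ components. Proposition \ref{prop:disconnect} then gives
\[
\Gamma^\theta_{\mu/\nu} \;=\; \sum_{\substack{\lambda \vdash N_0 \\ \lambda < \theta}} \sum_{\sigma \vdash N_1} c^\theta_{\lambda\sigma}\, \Gamma^\lambda_{\mu^0/\nu^0} \circ \Gamma^\sigma_{\mu^1/\nu^1},
\]
with non-negative integer Littlewood--Richardson coefficients $c^\theta_{\lambda\sigma}$. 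By the inductive hypothesis applied to $\mu^0/\nu^0$ and the base case applied to $\mu^1/\nu^1$, each factor $\Gamma^\lambda_{\mu^0/\nu^0}$ and $\Gamma^\sigma_{\mu^1/\nu^1}$ admits a non-negative integer expansion into Stanley--Stembridge characters. Proposition \ref{prop:(N)_products} then identifies the induction product of any two Stanley--Stembridge characters with the Stanley--Stembridge character of the disjoint union. Expanding everything and collecting yields a non-negative integer decomposition of $\Gamma^\theta_{\mu/\nu}$ as a sum of Stanley--Stembridge characters, proving Conjecture \ref{conj:main_char}.

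Finally, for Conjecture \ref{conj:Stem_imm}, I would use its equivalence (via Lemma \ref{lem:inner_prod}) to $h$-positivity of every immanant character. By the decomposition just established, it is enough to verify $h$-positivity for each Stanley--Stembridge character $\Gamma^{(N)}_{\sigma/\tau}$ appearing in the sum, where $\sigma/\tau$ may itself be disconnected. A further application of Proposition \ref{prop:(N)_products} factors any such $\Gamma^{(N)}_{\sigma/\tau}$ as an induction product of Stanley--Stembridge characters of its connected components, each of which is $h$-positive by the hypothesis (Conjecture \ref{conj:Stem_imm} for connected shapes with minimal $n$). Since $\eta^\alpha \circ \eta^\beta = \eta^{\alpha \cup \beta}$, the induction product preserves $h$-positivity, so $\Gamma^\theta_{\mu/\nu}$ is $h$-positive.

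The only delicate step is the inductive application of Proposition \ref{prop:disconnect}: one must track that the multiplicities remain integral and non-negative as the Stanley--Stembridge expansions of the two factors are multiplied through by the Littlewood--Richardson coefficients and collected. This is immediate from non-negativity of the $c^\theta_{\lambda\sigma}$ and of the Stanley--Stembridge multiplicities supplied by the inductive hypothesis, so the reduction goes through cleanly.
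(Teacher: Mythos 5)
Your proposal is correct and follows essentially the same route as the paper: Proposition \ref{prop:min_n} to fix $n = \ell(\mu/\nu)$, Proposition \ref{prop:disconnect} to reduce to connected components with non-negative Littlewood--Richardson multiplicities, Proposition \ref{prop:(N)_products} to recombine Stanley--Stembridge characters, and the closure of induced trivial characters under induction product. The paper's proof is just a terser version of your argument; your additional invocations of Propositions \ref{prop:no_zeros} and \ref{prop:connected_order} and the explicit induction on the number of components are reasonable elaborations of the same idea.
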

\begin{proof}
Assume the conjectures hold as in the claim. Proposition \ref{prop:min_n} ensures we may take $n$ to be the length of $\mu/\nu$. The immanant character for a disconnected skew shape can be written as the non-negative integral sum of immanant characters of connected components via Proposition \ref{prop:disconnect}. The induction product of induced trivial characters is itself an induced trivial character, so Conjecture \ref{conj:Stem_imm} follows from Proposition \ref{prop:disconnect}. The induction product distributes over sums of characters, so Conjecture \ref{conj:main_char} follows from Proposition \ref{prop:disconnect} and Proposition \ref{prop:(N)_products}.
\end{proof}
\ytableausetup{nosmalltableaux}

\section{The Hook Partition Case}
We aim to prove Conjecture \ref{conj:main_char} when $\theta$ is a hook  partition, which asserts that $\Gamma^{\theta}_{\mu/\nu}= \sum_{i \in I} \Gamma^{(N_i)}_{\mu^i/\nu^i}$, where the sum is over some finite index set $I$. Considering the value at $w = \mathrm{id}$, if the conjecture holds then $\abs{I} = K_{\theta,\mu-\nu}$. \par 
Lemma \ref{lem:suffices} below yields an avenue for a combinatorial proof for special cases of Conjecture \ref{conj:main_char}.
\begin{lemma}\label{lem:suffices}
Fix $\theta$ and $\mu/\nu$ and set $n = \ell(\mu/\nu)$. If there exists a finite set of Hessenberg functions $\{h^i \mid i \in I\}$ such that for all $w \in S_n$, $\widehat{h^i}(w)=1$ for precisely $K_{\theta,\widehat{w}}$-many $i \in I$, then Conjecture \ref{conj:main_char} holds for the character $\Gamma^{\theta}_{\mu/\nu}$. 
\end{lemma}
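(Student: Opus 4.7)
The plan is to unwind the definitions and observe that the hypothesis on the family $\{h^i \mid i \in I\}$ converts the Kostka-number summands appearing in $\Gamma^\theta_{\mu/\nu}$ directly into the indicator-function summands appearing in Stanley-Stembridge characters.

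First, I would fix $\theta$ and $\mu/\nu$ with $n = \ell(\mu/\nu)$ and assume the hypothesis: a finite index set $I$ and Hessenberg functions $h^i : [n] \to [n]$ for $i \in I$ such that for every $w \in S_n$,
\[
\#\{i \in I \mid \widehat{h^i}(w) = 1\} = K_{\theta,\widehat{w}},
\]
where $\widehat{w} = \mu + \delta - w(\nu+\delta)$. Next, for each $i \in I$ I would select any skew shape $\mu^i/\nu^i$ whose associated Hessenberg function (in the sense of Subsection 2.3) is $h^i$; such a skew shape exists for any Hessenberg function (e.g.\ one can take $\nu^i = \emptyset$ and build $\mu^i$ column-by-column so that its Jacobi-Trudi matrix has the prescribed pattern of zeros), and the particular choice is immaterial since by Lemma \ref{lem:kostka_hess} the Stanley-Stembridge character $\Gamma_{h^i} = \Gamma^{(N_i)}_{\mu^i/\nu^i}$ depends only on $h^i$.

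The core of the proof is then a short calculation. Using Lemma \ref{lem:kostka_hess} and then swapping the order of summation,
\[
\sum_{i \in I} \Gamma_{h^i}(w)
\;=\; \frac{n!}{\abs{C(w)}} \sum_{w' \in C(w)} \sum_{i \in I} \widehat{h^i}(w')
\;=\; \frac{n!}{\abs{C(w)}} \sum_{w' \in C(w)} K_{\theta,\widehat{w'}},
\]
where the final equality is exactly the hypothesis applied at each $w' \in C(w)$. But by the defining equation (\ref{eqn:gamma}) the right-hand side is precisely $\Gamma^\theta_{\mu/\nu}(w)$. Hence
\[
\Gamma^\theta_{\mu/\nu} \;=\; \sum_{i \in I} \Gamma^{(N_i)}_{\mu^i/\nu^i},
\]
which is a non-negative integer sum of Stanley-Stembridge characters, establishing Conjecture \ref{conj:main_char} for $\Gamma^\theta_{\mu/\nu}$.

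There is no real obstacle in this lemma itself: it is essentially a bookkeeping statement saying that the hypothesis is designed exactly to match the defining formula for $\Gamma^\theta_{\mu/\nu}$. The genuine difficulty, of course, is deferred to the task of \emph{producing} such a family of Hessenberg functions $\{h^i\}$ for a given $\theta$ and $\mu/\nu$; that is where the combinatorics of hook-shape Kostka numbers (Lemma \ref{lem:hook_kostka}, which gives $K_{\theta,\widehat{w}} = \binom{r-1}{k}$ with $r$ the number of nonzero entries of $\widehat{w}$) will need to be matched against a carefully constructed collection of Hessenberg functions in the proof of Theorem A.
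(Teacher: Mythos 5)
Your proof is correct and follows essentially the same route as the paper's: both apply the hypothesis pointwise on a conjugacy class and swap the order of summation to identify $\Gamma^\theta_{\mu/\nu}$ with $\sum_{i\in I}\Gamma_{h^i}$ via Lemma \ref{lem:kostka_hess}. The extra remark about realizing each $h^i$ by a skew shape is a harmless (and slightly more careful) addition, not a different argument.
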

\begin{proof}
Say that 
\[
 K_{\theta, \mu + \delta-w'(\nu+\delta)} = \sum_{i \in I} \widehat{h^i}(w')
\]
for all $w' \in S_n$. Let $\lambda \vdash n$. Then
\begin{align*}
\frac{n!}{\abs{C(\lambda)}}\sum_{w' \in C(\lambda)} \left(K_{\theta, \mu + \delta-w'(\nu+\delta)} \right)&= \frac{n!}{\abs{C(\lambda)}}\sum_{w' \in C(\lambda)}\left(\sum_{i \in I} \widehat{h^i}(w')\right)\\
 &= \sum_{i \in I} \left( \frac{n!}{\abs{C(\lambda)}}\sum_{w' \in C(\lambda)} \widehat{h^i}(w')\right).
\end{align*}

Since $\lambda$ was arbitrary, ${\ds \Gamma^{\theta}_{\mu/\nu}= \sum_{i \in I} \Gamma_{h^i}}$. By Lemma \ref{lem:kostka_hess} the claim follows.
\end{proof}
Recall from Lemma \ref{lem:hook_kostka} that the Kostka numbers for hook partitions are particularly nice, as if $\ell(c)$ is the number of nonzero entries in $c$, and $\theta$ is a hook partition of length $k+1$, then ${\ds K_{\theta,c} = \choos{\ell(c)-1}{k}}$. \par 
The following allows us to apply Lemma \ref{lem:suffices} in the case where $\theta$ is a hook. This is the key combinatorial result of this section.
\begin{theorem}\label{thm:hook_partition}
Let ${\ds \theta = (N-k,1,...,1)}$ be a hook partition and $\mu/\nu$ a skew shape with no empty rows and with associated Hessenberg function $h \colon [n] \to [n]$. Let $\theta \vdash N$ and $\abs{\mu/\nu} = N$. Then 
\[
K_{\theta,\widehat{w}} = \sum_{\substack{J \subset [n-1]\\ \abs{J} = k}} \widehat{h^J}(w),
\]
for all $w \in S_n$, where for each $J \subset [n-1]$, $h^J$ is the Hessenberg function
\[
h^J(i) = \begin{cases} h(i) - 1 & i \in J \text{ and } \mu_{h(i)}-\nu_{i}+i-h(i) = 0\\
h(i) & \text{otherwise.}
\end{cases}
\]
\end{theorem}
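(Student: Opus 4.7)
The plan is to fix $w \in S_n$ and show that both sides of the claimed identity equal the same binomial coefficient $\binom{n-1-|B(w)|}{k}$, where
\[
B(w) := \left\{i \in [n-1] \;\middle|\; w(i) = h(i) \text{ and } \mu_{h(i)} - \nu_i + i - h(i) = 0\right\}.
\]

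First I would dispose of the case where $w$ is not Hessenberg with respect to $h$ (i.e., $w(i) > h(i)$ for some $i$). The RHS vanishes trivially since $h^J(i) \le h(i)$ forces $\widehat{h^J}(w) = 0$ for every $J$. For the LHS I would invoke the coordinate formula $(\widehat{w})_{w(i)} = \mu_{w(i)} - \nu_i + i - w(i)$ from the proof of Lemma \ref{lem:pos}; together with the definition of $h$, this yields a negative coordinate of $\widehat{w}$ whenever $w(i) > h(i)$, so $K_{\theta,\widehat{w}} = 0$. Now assume $w$ is Hessenberg. Unpacking the definition of $h^J$, the condition $\widehat{h^J}(w) = 1$ reduces to requiring $w(i) < h(i)$ at every $i \in J$ where the zero condition holds, i.e., to $J \cap B(w) = \emptyset$. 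The number of $k$-subsets of $[n-1]$ disjoint from $B(w)$ is exactly $\binom{n-1-|B(w)|}{k}$, which is the RHS.

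For the LHS, Lemma \ref{lem:hook_kostka} gives $K_{\theta, \widehat{w}} = \binom{r(w)-1}{k}$, where $r(w)$ is the number of nonzero coordinates of $\widehat{w}$. It therefore suffices to prove $r(w) = n - |B(w)|$, and this combinatorial identity is the main obstacle. The key observation is that, for each fixed $i$, the quantity $\mu_j - \nu_i + i - j$ is \emph{strictly} decreasing in $j$: since $\mu$ is weakly decreasing, $\mu_j - j$ drops by at least $1$ as $j$ increases by $1$. Combined with the maximality defining $h(i)$, this forces $(\widehat{w})_{w(i)} = \mu_{w(i)} - \nu_i + i - w(i) = 0$ to occur if and only if $w(i) = h(i)$ together with the zero condition at $i$. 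Because $w$ permutes $[n]$, this produces a bijection between the zero coordinates of $\widehat{w}$ and the set of indices $i \in [n]$ satisfying both of these conditions.

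The final step uses the no-empty-rows hypothesis on $\mu/\nu$ to exclude $i = n$ from this set: $h(n) = n$ forces $w(n) = n$ whenever $w(n) = h(n)$, and then the zero condition at $i = n$ would read $\mu_n = \nu_n$, contradicting the hypothesis. Hence the bijection lands in $B(w) \subseteq [n-1]$, giving $r(w) = n - |B(w)|$ and completing the identification of both sides with $\binom{n-1-|B(w)|}{k}$. The essential work is concentrated in the strict-monotonicity argument that pins down exactly where $(\widehat{w})_{w(i)}$ vanishes, and the no-empty-rows hypothesis is used only to handle the boundary index $i = n$.
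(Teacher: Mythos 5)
Your argument is essentially the paper's proof: your set $B(w)$ is the paper's $Z_w$ (the indices contributing zero coordinates to $\widehat{w}$), and both sides are identified with $\binom{n-1-\abs{B(w)}}{k}$ in the same way, including the strict decrease of $\mu_j - \nu_i + i - j$ in $j$ to show a zero can occur only at $j=h(i)$, and the exclusion of the index $i=n$. The one piece you omit is the verification that each $h^J$ is actually a Hessenberg function (weakly increasing with $h^J(i)\geq i$), which the statement asserts, which the paper proves in its opening paragraphs, and which Corollary \ref{cor:hook_partition} needs in order to interpret each $\Gamma_{h^J}$ as a Stanley--Stembridge character; this verification is a second place where the no-empty-rows hypothesis is used (if $h(i)=i$ and the zero condition held at $i$, one would get $\mu_i=\nu_i$), so your closing claim that the hypothesis enters only at the boundary index $i=n$ is not quite accurate.
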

\begin{proof}[Proof of Theorem \ref{thm:hook_partition}]
First we verify that $h^J$ is in fact a Hessenberg function. If $h^J(i) < i$, then since $h(i)-1 \leq h^J(i)$ either $h(i) < i$ or $h(i) = i$. If $h(i) < i$ then we reach contradiction as $h$ is a Hessenberg function. If $h(i) = i$ and $h^J(i) < i$ then $i \in J$ and $\mu_{h(i)}-\nu_i+i-h(i) = \mu_i-\nu_i+i-i = 0$. This contradicts our assumption that $\mu/\nu$ had no nonzero rows. So we have that $h^J(i) \geq i$ for all $i \in [n]$.\par 
Now we check that $h^J$ is non-decreasing. Since $h$ is non-decreasing, if $h^J(i) > h^J(i+1)$ then $h(i) = h(i+1)$ and $h(i+1)-1 = h^J(i+1)$. We have however that 
\begin{align*}
0 &\leq \mu_{h(i)}-\nu_{i}+i-h(i) \\&\lneq \mu_{h(i)} - \nu_{i+1}+(i+1)-h(i) \\
&= \mu_{h(i+1)} - \nu_{i+1}+(i+1)-h(i+1),
\end{align*}
so $\mu_{h(i+1)} - \nu_{i+1}+(i+1)-h(i+1) \neq 0$, and thus $h^J(i+1) = h(i+1) \geq h(i) \geq h^J(i)$. Thus every $h^J$ is a Hessenberg function. \par 
Since $h^J(i) \leq h(i)$ for all subsets $J$ and for all $i \in [n]$, $\widehat{h}(w) = 0$ implies that $\widehat{h^J}(w) = 0$ for all $J$. Thus it suffices to restrict our attention to those $w$ such that $\widehat{h}(w) = 1$.\par 
Recall $\widehat{w}$ denotes the sequence $\mu + \delta - w(\nu+\delta)$ of $n$ integers. Let $Z_w = \{i \in [n] \mid \widehat{w}_{w(i)} = 0\}$, and let $z_w = \abs{Z_w}$. Note $n \notin Z_w$. The content $\widehat{w}$ has $n-z_w$ many nonzero terms, and thus $K_{\theta,\widehat{w}} = \choos{n-z_w-1}{k}$ by Lemma \ref{lem:hook_kostka}.\par 
Consider $w \in S_n$ such that $\widehat{h}(w) = 1$ and $\widehat{h^J}(w) = 0$. We will show that $J \cap Z_w = \{i \in [n-1] \mid h^J(i) < w(i) = h(i)\}$. Say that $i \in J\cap Z_w$. Since $i \in Z_w$, $\mu_{w(i)}-\nu_{i}+i-w(i)= 0$, and so $w(i) = h(i)$. Since $i \in J$ and  $\mu_{h(i)}-\nu_{i}+i-h(i)=\mu_{w(i)}-\nu_{i}+i-w(i)=0$, we see that $h^J(i) = h(i)-1 = w(i)-1$. So $h^J(i) < w(i)$, and so $J \cap Z_w \subseteq \{i \in [n-1] \mid h^J(i) < w(i) = h(i)\}$. \par 
Let $i \in \{i \in [n-1] \mid h^J(i) < w(i) = h(i)\}$. Then $h^J(i) < h(i)$. By the construction of $h^J$, $i \in J$ and $\mu_{h(i)}-\nu_{i}+i-h(i)=0$. Since $w(i) = h(i)$, $\mu_{w(i)}-\nu_{i}+i-w(i) = 0$ and so $i \in Z_w$. Thus we have the other direction of containment and $J \cap Z_w = \{i \in [n-1] \mid h^J(i) < w(i) =h(i)\}$. \par 
We next claim that $\widehat{h^J}(w) = 1$ if and only if $J \cap Z_w = \emptyset$. We use the presentation $J \cap Z_w = \{i \in [n-1] \mid h^J(i) < w(i) =h(i)\}$. If $\widehat{h^J}(w) = 1$ then there exist no $i \in [n]$ such that $h^J(i) < w(i)$, and so $J \cap Z_w = \emptyset$. On the other hand, if $\widehat{h^J}(w) = 0$ then there exists an $i \in [n-1]$ such that $h^J(i) < w(i)$. Since $w(i) \leq h(i)$, it must be that $w(i) = h(i)$. So $i \in J\cap Z_w$ and $J \cap Z_w \neq \emptyset$. \par 
Now $J \cap Z_w = \emptyset$ exactly when $J \subset \left([n-1] \setminus Z_w\right)$. There are precisely \[\ds \choos{\abs{[n-1]\setminus Z_w}}{\abs{J}} = \choos{n-z_w-1}{k} = K_{\theta,\widehat{w}} \] many such subsets $J$. This concludes the proof.
\end{proof}
We apply Theorem \ref{thm:hook_partition} to obtain an expansion for hook partition immanant characters in terms of Stanley-Stembridge characters.
\begin{corollary}\label{cor:hook_partition}
Let ${\ds \theta = (N-k,1,...,1)}$ be a hook partition and $\mu/\nu$ a skew shape with no empty rows and associated Hessenberg function $h \colon [n] \to [n]$. Let $\theta \vdash N = \abs{\mu/\nu}$. Then 
\begin{equation}\label{eqn:expansion}
\Gamma^{\theta}_{\mu/\nu} = \sum_{\substack{J \subset [n-1]\\ \abs{J} = k}} \Gamma_{h^J},
\end{equation}
where 
\[
h^J(i) = \begin{cases} h(i) - 1 & i \in J \text{ and } \mu_{h(i)}-\nu_{i}+i-h(i) = 0\\
h(i) & \text{otherwise.}
\end{cases}
\]
Furthermore, if we collect terms in \emph{(\ref{eqn:expansion})} so that 
\[
\Gamma^{\theta}_{\mu/\nu} = \sum_{J} c_J \Gamma_{h^J}
\]
where each $h^J$ is a unique Hessenberg function, then 
\[c_J = \choos{a}{b}, \text{  where  } \begin{tabular}{l}$a = \abs{\{i \in [n-1] \mid \mu_{h(i)}-\nu_{i}+i-h(i) > 0\}}$\\[6pt]$b=k - \abs{\{ i \in [n-1] \mid h(i) \neq h^J(i)\}}.$\end{tabular}\]
\end{corollary}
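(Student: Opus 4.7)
The plan for the expansion (\ref{eqn:expansion}) is to combine Theorem \ref{thm:hook_partition} with Lemma \ref{lem:suffices}. Theorem \ref{thm:hook_partition} supplies exactly the hypothesis required by Lemma \ref{lem:suffices}, namely $K_{\theta,\widehat{w}} = \sum_{J \subset [n-1],\, \abs{J}=k} \widehat{h^J}(w)$ for every $w \in S_n$; the conclusion of Lemma \ref{lem:suffices} then reads $\Gamma^\theta_{\mu/\nu} = \sum_J \Gamma_{h^J}$ without further work.

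To prove the coefficient formula, the plan is to identify when two distinct subsets $J, J' \subset [n-1]$ yield the same Hessenberg function $h^J = h^{J'}$. Let
\[
E = \{i \in [n-1] \mid \mu_{h(i)} - \nu_{i} + i - h(i) = 0\}
\]
denote the set of \emph{eligible} indices at which the defining rule for $h^J$ can decrement $h$, and set $E^c = [n-1] \setminus E$. Since $\mu_{h(i)} - \nu_{i} + i - h(i)$ is always non-negative by the definition of $h$, we have $\abs{E^c} = a$. Directly from the formula defining $h^J$, the function agrees with $h$ off of $J \cap E$ and takes value $h(i) - 1$ on $J \cap E$; for $i \in E^c$ the conditional cannot fire, so membership of $i$ in $J$ is irrelevant to $h^J$. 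Hence $h^J$ depends only on $J \cap E$, and two size-$k$ subsets of $[n-1]$ produce the same Hessenberg function if and only if they have the same intersection with $E$.

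To count $c_J$, I would fix a distinguished $J$ and enumerate the size-$k$ subsets $J' \subset [n-1]$ satisfying $J' \cap E = J \cap E$. Since the number of indices where $h^J$ differs from $h$ equals $\abs{J \cap E}$, one has $b = k - \abs{J \cap E} = \abs{J \cap E^c}$. Any such $J'$ is obtained by retaining $J \cap E$ and replacing $J \cap E^c$ by an arbitrary size-$b$ subset of $E^c$, yielding $\choos{a}{b}$ total choices and hence the claimed formula.

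The main point requiring care is simply the separation of $[n-1]$ into eligible and ineligible indices and the observation that the conditional defining $h^J(i)$ depends only on this dichotomy together with the single bit $[i \in J]$. Since Theorem \ref{thm:hook_partition} has already absorbed the substantive combinatorial content of the corollary, no deeper obstacle remains; once the bookkeeping is arranged as above, everything reduces to the elementary count of size-$b$ subsets of $E^c$.
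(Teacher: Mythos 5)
Your proposal is correct and follows essentially the same route as the paper: equation (\ref{eqn:expansion}) is obtained by feeding Theorem \ref{thm:hook_partition} into Lemma \ref{lem:suffices}, and the coefficient $c_J$ is computed by counting the size-$k$ subsets $J'$ with $h^{J'}=h^J$, which must contain $J\cap E$ and fill out the remaining $b$ slots from the $a$ indices where $\mu_{h(i)}-\nu_i+i-h(i)>0$. Your explicit dichotomy between eligible and ineligible indices is just a cleaner bookkeeping of the same argument.
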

\begin{proof}
Equation (\ref{eqn:expansion}) follows directly from Lemma \ref{lem:suffices} and Theorem \ref{thm:hook_partition}. By construction $c_J$ is the number of $J' \subset [n]$ such that $\abs{J'} = k$ and $h^J = h^{J'}$. Those $J'$ must contain the $i \in [n]$ such that $i \in J$ and $h^J(i) < h(i)$. The remaining $k - \abs{\{ i \in [n-1] \mid h(i) \neq h^J(i)\}}$ elements of $j \in J'$ can be any $j$ such that $h^{J'}(j) = h(j)$, in particular any $j \in [n-1]$ such that $\mu_{h(j)}-\nu_{j}+j-h(j) > 0$.
\end{proof} 
Given Lemma \ref{lem:inner_prod}, Corollary \ref{cor:hook_partition} states that in the Schur expansion of the $\psi$-immanant of a Jacobi-Trudi matrix for any virtual character $\psi$, the hook partition Schur coefficients are non-negative sums of trivial Schur coefficients in the Schur expansions for $\psi$-immanants of some collection of Jacobi-Trudi matrices.
\begin{example} 
Let $\theta = (6,1,1)$ and $\mu/\nu = (3,3,3,1)/(1,1)$ so $h = (3,3,4,4)$. The subsets $J \subset [3]$ and Hessenberg functions $h^J$ from Theorem \ref{thm:hook_partition} are\vspace{0.5cm}
\begin{center}
\begin{tabular}{r|c|l}
$J \subset \{1,2,3\}$ &  & $h^J$ \\\hline
$\{1,2\}$ & \parbox{1.8in}{$\mu_{h(1)}-\nu_1+1-h(1) =0$ \\ $\mu_{h(2)}-\nu_2+2-h(2)=1$} & $(2,3,4,4)$ \\\hline
$\{1,3\}$ &\parbox{1.8in}{$\mu_{h(1)}-\nu_1+1-h(1) =0$ \\ $\mu_{h(3)}-\nu_3+3-h(3)=0$}  & $(2,3,3,4)$ \\\hline
$\{2,3\}$ &\parbox{1.8in}{$\mu_{h(2)}-\nu_2+2-h(2) =1$ \\ $\mu_{h(3)}-\nu_3+3-h(3)=0$}  & $(3,3,3,4)$
\end{tabular}
\end{center}
\vspace{0.5cm}
and so 
\[K_{(6,1,1),\widehat{w}} = h^{\{1,2\}}(w) + h^{\{1,3\}}(w) + h^{\{2,3\}}(w)\]
for all $w \in S_n$, and in particular
\[
\Gamma^{(6,1,1)}_{(3,3,3,1)/(1,1)} = \Gamma_{(2,3,4,4)}+\Gamma_{(2,3,3,4)}+\Gamma_{(3,3,3,4)}.
\]
We may also visualize the Hessenberg function $h^J$ for each subset $J$ as follows. Look at the corners of the Hessenberg function cut out in the Jacobi-Trudi matrix and remove the corner if it contains a $1$ and the column is indexed by an element of $J$.
\begin{center}
\begin{tabular}{ccc}
$\{1,2\}$ & $\{1,3\}$ & $\{2,3\}$ \\
${\ds 
\begin{ytableau}
\none[\downarrow] & \none[\downarrow] & \none & \none \\
*(light-gray)h_2 &*(light-gray) h_3 &*(light-gray) h_5 & *(light-gray)h_6\\
*(light-gray)h_1 &*(light-gray) h_2 & *(light-gray)h_4 & *(light-gray)h_5\\
*(pink)1   &*(light-gray) h_1   &*(light-gray) h_3 & *(light-gray)h_4\\
0 & 0 & *(light-gray)1    & *(light-gray)h_1\\
\none[2] & \none[3] & \none[4] & \none[4]
\end{ytableau}
}$
&
${\ds 
\begin{ytableau}
\none[\downarrow] & \none & \none[\downarrow] & \none \\
*(light-gray)h_2 &*(light-gray) h_3 &*(light-gray) h_5 & *(light-gray)h_6\\
*(light-gray)h_1 &*(light-gray) h_2 & *(light-gray)h_4 & *(light-gray)h_5\\
*(pink)1   &*(light-gray) h_1   &*(light-gray) h_3 & *(light-gray)h_4\\
0 & 0 & *(pink)1    & *(light-gray)h_1\\
\none[2] & \none[3] & \none[3] & \none[4]
\end{ytableau}
}$
&
${\ds 
\begin{ytableau}
\none & \none[\downarrow] & \none[\downarrow] & \none \\
*(light-gray)h_2 &*(light-gray) h_3 &*(light-gray) h_5 & *(light-gray)h_6\\
*(light-gray)h_1 &*(light-gray) h_2 & *(light-gray)h_4 & *(light-gray)h_5\\
*(light-gray)1   &*(light-gray) h_1   &*(light-gray) h_3 & *(light-gray)h_4\\
0 & 0 & *(pink)1    & *(light-gray)h_1\\
\none[3] & \none[3] & \none[3] & \none[4]
\end{ytableau}
}$\\

\end{tabular}
\end{center}
The Hessenberg functions $h^{\{1,2\}} = (2,3,4,4)$, $h^{\{1,3\}} = (2,3,3,4)$, and $h^{\{2,3\}} = (3,3,3,4)$ are easily obtained from the above diagrams.
\end{example}
As an application of our result, we apply Theorem \ref{thm:hook_partition} where the Stanley-Stembridge conjecture is already known in order to prove the hook partition version of Conjecture \ref{conj:Stem_imm} in those cases. 
 A Hessenberg function is \emph{abelian} if $h(h(1)+1)) = n$ or if $h(1)=n$. In the abelian case, the Stanley-Stembridge Conjecture is known.
\begin{theorem}\label{thm:abelian}\cite{harada2017cohomology}
If $h$ is abelian, then $\Gamma_h$ is the character of a permutation representation of $S_n$ whose transitive components are each isomorphic to the action of $S_n$ on cosets of a Young subgroup.
\end{theorem}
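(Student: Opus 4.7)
The plan is to exploit the two-block structure of an abelian Hessenberg function and reduce to a trivial base case. Write $a := h(1)$. In the case $a = n$, the Hessenberg function is $h = (n, n, \ldots, n)$, every $w \in S_n$ satisfies $w(j) \leq h(j)$, and Lemma \ref{lem:kostka_hess} immediately gives $\Gamma_h(w) = n!$ for all $w$. Thus $\Gamma_h = n!\cdot \eta^{(n)}$, which is visibly a non-negative sum of induced trivial characters (from the Young subgroup $S_n$ itself). This disposes of the case $h(1) = n$.

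For the case $a < n$ with $h(a+1) = n$, the columns indexed by $j \geq a+1$ are unrestricted while the columns $j \leq a$ are constrained by the values $h(2), \ldots, h(a) \in [a,n]$. First I would interpret $\Gamma_h$ as the dot-action character of $S_n$ on the cohomology $H^*(X(h))$ of the regular semisimple Hessenberg variety $X(h) \subset \mathrm{Fl}_n$; this identification comes from combining the constructions of Tymoczko with the resolution of the Shareshian--Wachs conjecture by Brosnan--Chow and Guay-Paquet. The abelian hypothesis forces $X(h)$ to admit a Zariski-locally-trivial $S_n$-equivariant decomposition whose base and fiber are each assembled from strictly smaller abelian Hessenberg varieties. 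On cohomology this realizes $\Gamma_h$ as an induction product of characters $\Gamma_{h'}$ for smaller abelian $h'$, and each such factor is, by induction on $n$, a non-negative integer sum of induced trivial characters of Young subgroups. Since the induction product of induced trivial characters of Young subgroups is again a sum of induced trivial characters of (larger) Young subgroups, the desired conclusion follows.

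The main obstacle is making the equivariant decomposition precise at the level of the $S_n$-dot action: one must track exactly which Young subgroup acts on each piece, and why the abelian hypothesis, rather than an arbitrary Hessenberg condition, is required to force the compatibility with the $(a,n-a)$-splitting. Harada--Precup \cite{harada2017cohomology} address this by working in the GKM model for $H^*_T(X(h))$, where the dot action and the cell structure admit a combinatorial description that makes the Young subgroup stabilizers transparent. Any proof proceeding along the geometric lines sketched above must, at this step, perform the analogous bookkeeping to identify these stabilizers explicitly; a purely combinatorial proof would need to construct the sum $\Gamma_h = \sum_\lambda c_\lambda \eta^\lambda$ directly from $h$, which I would attempt by induction on $n - a$ after verifying compatibility of the decomposition with Proposition \ref{prop:disconnect}.
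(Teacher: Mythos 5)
First, note that the paper does not prove this statement at all: Theorem \ref{thm:abelian} is imported verbatim from \cite{harada2017cohomology} and used as a black box in Corollary \ref{cor:preabelian}. So there is no internal argument to compare against; the question is whether your sketch stands on its own as a proof of the Harada--Precup result. It does not, for two concrete reasons.

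The central structural claim --- that for $a = h(1) < n$ the abelian hypothesis forces an $S_n$-equivariant fibration of $X(h)$ realizing $\Gamma_h$ as an induction product of Stanley--Stembridge characters of smaller abelian Hessenberg functions --- is false for connected $h$. By Proposition \ref{prop:(N)_products} and Lemma \ref{lem:disconnected}, an induction product $\Gamma_{h'} \circ \Gamma_{h''}$ with $h'$, $h''$ on fewer letters is exactly what arises from a \emph{disconnected} skew shape, and its Frobenius image is a product of symmetric functions of strictly smaller degrees. Already for the abelian function $h = (2,3,3)$ (here $h(h(1)+1) = h(3) = 3 = n$) one computes $\Gamma_h(\mathrm{id}) = 6$, $\Gamma_h(\text{transposition}) = 4$, $\Gamma_h(\text{3-cycle}) = 3$, whence $\mathrm{Frob}(\Gamma_h) = p_1^3 + 2p_2p_1 + p_3 = 3h_{(3)} + h_{(2,1)}$. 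The presence of $h_{(3)}$ rules out any expression of $\Gamma_h$ as a product of lower-degree pieces, so no equivariant fibration of the kind you posit can exist, and your induction on $n$ has no decomposition to recurse on. (Your base case $h(1) = n$, giving $\Gamma_h = n!\,\eta^{(n)}$, is fine, but it is the only part of the argument that closes.)

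Second, even where the sketch gestures at the correct setting (the GKM/dot-action model and the Shareshian--Wachs, Brosnan--Chow, Guay-Paquet identifications), the essential content --- identifying the Young subgroup stabilizers and proving $h$-positivity of the resulting decomposition --- is explicitly deferred to ``the analogous bookkeeping'' carried out in \cite{harada2017cohomology}. That makes the argument circular as a proof of the theorem: the hard combinatorial induction in Harada--Precup proceeds by expressing $\mathrm{Frob}(\Gamma_h)$ as a \emph{sum} of terms of the form $h_m \cdot \mathrm{Frob}(\Gamma_{h'})$ indexed by suitable subsets, not as a single product, and none of that machinery is reconstructed here. If your goal is only to use the theorem, cite it as the paper does; if your goal is to prove it, the fibration step must be replaced by an actual additive recursion and the positivity of its coefficients established.
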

\begin{definition}\label{def:preabelian}
Let $\mu/\nu$ be a skew shape and $H_{\mu/\nu}$ the associated Jacobi-Trudi matrix. Let $H_{\mu/\nu}'$ be the matrix obtained from $H_{\mu/\nu}$ by replacing all $1$-s with $0$-s. The pattern of nonzero entries in $H_{\mu/\nu}'$ determines a Hessenberg function $h'$. We say the skew shape $\mu/\nu$ is \emph{pre-abelian} if $h'$ is an abelian Hessenberg function.
\end{definition}
\begin{example}
The skew shapes $(3,3,3,1)/(1,1)$, $(4,3,2,1)$, and $(2,2,1,1)/(1,1)$ for example are not pre-abelian. If we were to replace ``$1$"-s with ``$0$"-s in the Jacobi-Trudi matrices $H_{(3,3,3,1)/(1,1)}$, $H_{(4,3,2,1)}$, and $H_{(2,2,1,1)/(1,1)}$, the patterns of zeros correspond to Hessenberg functions $(2,3,3,4)$, $(2,3,3,4)$, and $(1,2,3,4)$ respectively. None of these are abelian Hessenberg functions. \par 
On the other hand, the following skew shapes are pre-abelian, and appear with the corresponding Jacobi-Trudi matrices.
\begin{center}
\begin{tabular}{ccccc}
$(4,4,4,4)/(1)$ && $(6,5,4,4)/(2,1) $ && $(2,2,2,2)$\\\\
$\begin{ytableau}
*(light-gray)h_3  & *(light-gray)h_5 & *(light-gray)h_6  & *(light-gray)h_7  \\
*(light-gray)h_2  & *(light-gray)h_4 & *(light-gray)h_5  & *(light-gray)h_6  \\
*(light-gray)h_1  & *(light-gray)h_3 & *(light-gray)h_4  & *(light-gray)h_5  \\
*(pink)1  & *(light-gray)h_2   & *(light-gray)h_3  & *(light-gray)h_4  \\
\end{ytableau}$& &
$\begin{ytableau}
*(light-gray)h_4  & *(light-gray)h_6 & *(light-gray)h_8 & *(light-gray)h_9  \\
*(light-gray)h_2  & *(light-gray)h_4 & *(light-gray)h_6 & *(light-gray)h_7  \\
*(pink)1  & *(light-gray)h_2   & *(light-gray)h_4 & *(light-gray)h_5  \\
0  & *(light-gray)h_1 & *(light-gray)h_3 & *(light-gray)h_4  \\
\end{ytableau}$& &
$\begin{ytableau}
*(light-gray)h_2  & *(light-gray)h_3 & *(light-gray)h_4 & *(light-gray)h_5  \\
*(light-gray)h_1  & *(light-gray)h_2 & *(light-gray)h_3 & *(light-gray)h_4  \\
*(pink)1    & *(light-gray)h_1 & *(light-gray)h_2 & *(light-gray)h_3  \\
0    & *(pink)1   & *(light-gray)h_1 & *(light-gray)h_2  \\
\end{ytableau}$
\end{tabular}
\end{center}
\end{example}
In essence, a skew shape is pre-abelain if the sum (\ref{eqn:expansion}) in Theorem \ref{thm:hook_partition} yields only abelian Hessenberg functions.
\begin{corollary}\label{cor:preabelian}
If $\theta$ is a hook partition and $\mu/\nu$ is pre-abelian, then $\Gamma^\theta_{\mu/\nu}$ is the character of a permutation representation of $S_n$ whose transitive components are each isomorphic to the action of $S_n$ on cosets of a Young subgroup. In other words, under these assumptions Conjecture \ref{conj:Stem_G} holds.
\end{corollary}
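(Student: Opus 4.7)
My plan is to combine Corollary \ref{cor:hook_partition} with Theorem \ref{thm:abelian} directly. The corollary gives
\[
\Gamma^\theta_{\mu/\nu} = \sum_{J} c_J \, \Gamma_{h^J}
\]
with $c_J \in \Z_{\geq 0}$, so if I can show that every Hessenberg function $h^J$ appearing is abelian, then Theorem \ref{thm:abelian} writes each $\Gamma_{h^J}$ as a non-negative integer combination of induced trivial characters $\eta^\lambda$. Non-negative integer combinations preserve this property (by the equivalent characterizations of such characters given in Section \ref{sec:prelim}), so $\Gamma^\theta_{\mu/\nu}$ has the desired form and Conjecture \ref{conj:Stem_G} holds.

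The first step is to compare each $h^J$ with the pre-abelian Hessenberg function $h'$ from Definition \ref{def:preabelian}. A case analysis on the three possibilities --- namely, whether the bottom entry of column $i$ in $H_{\mu/\nu}$ equals $1$, combined with whether $i \in J$ --- should yield the pointwise inequalities
\[
h'(i) \leq h^J(i) \leq h(i), \qquad h^J(i) - h'(i) \in \{0,1\},
\]
since both $h'$ and $h^J$ agree with $h$ except at columns whose bottom entry equals $1$, where $h'$ always drops by $1$ while $h^J$ drops by $1$ only when $i \in J$.

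The key step is then the following small general lemma: if $g$ is an abelian Hessenberg function on $[n]$ and $g'$ is any Hessenberg function with $g \leq g'$ pointwise, then $g'$ is also abelian. I would prove this by a short case split. If $g(1) = n$, then $g'(1) = n$ as well, so $g'$ is abelian. Otherwise $g(g(1)+1) = n$, and either $g'(1) = n$ (done) or $g'(1) < n$, in which case the monotonicity of $g'$ together with $g \leq g'$ gives
\[
g'(g'(1)+1) \geq g'(g(1)+1) \geq g(g(1)+1) = n,
\]
so $g'(g'(1)+1) = n$. Applying this lemma with $g = h'$ (abelian by the pre-abelian hypothesis) and $g' = h^J$ shows that each $h^J$ appearing in the decomposition is abelian.

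The main obstacle, such as it is, lies in carefully cataloguing the cases that compare $h$, $h^J$, and $h'$ to justify the pointwise bound $h' \leq h^J$; once this is in hand, everything else follows immediately from the monotonicity lemma above and Theorem \ref{thm:abelian}. I do not anticipate any serious difficulty beyond this bookkeeping, as the argument factors cleanly through the decomposition provided by Corollary \ref{cor:hook_partition}.
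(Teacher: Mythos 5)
Your proof is correct and follows essentially the same route as the paper: decompose $\Gamma^\theta_{\mu/\nu}$ via Corollary \ref{cor:hook_partition}, argue that every $h^J$ appearing is abelian, and apply Theorem \ref{thm:abelian}. The only difference is that the paper simply asserts that pre-abelian implies each $h^J$ is abelian, whereas you supply the justification via the pointwise bounds $h' \leq h^J \leq h$ and the (correct) monotonicity lemma that any Hessenberg function dominating an abelian one is abelian.
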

\begin{proof}
If $h$ is pre-abelian then $h^J$ is abelian for all $J \subset[n-1]$. So the Hessenberg functions in the decomposition from Corollary \ref{cor:hook_partition} are all abelian. Apply Theorem \ref{thm:abelian}.
\end{proof}
The following result is due to Dahlberg.
\begin{theorem}\label{thm:small}\cite[Thm. 5.4]{Dahlberg19}
If $h$ is such that $h(i) - i \leq 2$, then $\Gamma_h$ is the character of a permutation representation of $S_n$ whose transitive components are each isomorphic to the action of $S_n$ on cosets of a Young subgroup.
\end{theorem}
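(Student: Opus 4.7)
The plan is to translate the statement into an equivalent claim about $e$-positivity of the chromatic symmetric function of the unit interval graph $G_h$ associated to $h$, via the Shareshian--Wachs correspondence, and then to proceed by induction on $n$. Under this correspondence, proving that $\Gamma_h$ is a non-negative integer sum of induced trivial characters of Young subgroups is equivalent to showing that the chromatic symmetric function $X_{G_h}$ is $e$-positive. The bandwidth condition $h(i) - i \leq 2$ forces $G_h$ to be rigidly structured: every vertex $i$ is adjacent only to $i \pm 1$ and $i \pm 2$, so $G_h$ is locally a union of overlapping triangles.

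First I would dispose of the disconnected case. If there exists $i \in [n-1]$ with $h(i) = i$, then Proposition \ref{prop:(N)_products} writes $\Gamma_h$ as an induction product $\Gamma_{h_1} \circ \Gamma_{h_2}$, where both $h_1$ and $h_2$ are Hessenberg functions on fewer letters that still satisfy the bandwidth bound. Since the induction product of characters of the form $\eta^\lambda$ is again such, the inductive hypothesis closes this case. So we may restrict to the connected case where $h(i) \geq i+1$ for every $i < n$, which combined with $h(i) - i \leq 2$ leaves only a short list of allowable Hessenberg functions at each $n$, and forces $h$ to be ``close to abelian'' in the sense of Definition \ref{def:preabelian}.

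The core of the argument would be a triple-deletion identity applied to a triangle $\{i, i+1, i+2\}$ in $G_h$, in the style of Orellana--Scott. Such an identity rewrites $X_{G_h}$ as a signed sum of $X$'s for graphs obtained by deleting two of the three triangle edges or by contracting one of them, with the signs combining so that the net expression is a manifestly non-negative sum of products of elementary symmetric functions whenever each summand graph is of bandwidth $\leq 2$ or abelian. When a summand graph reduces to an abelian Hessenberg shape, Theorem \ref{thm:abelian} supplies the required $h$-positive decomposition; when it remains in the bandwidth-$2$ class, induction on $n$ (or on the number of triangles) applies.

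The main obstacle is that deletion-contraction need not preserve the property of being a unit interval graph of bandwidth $\leq 2$, and so intermediate graphs may fall outside the class where either induction or Theorem \ref{thm:abelian} applies. To handle this I would enumerate the finite list of local configurations that can arise in a connected bandwidth-$2$ Hessenberg function, perform the triple-deletion in each, and verify by hand that every intermediate graph is either abelian, disconnected into bandwidth-$\leq 2$ pieces, or strictly smaller. A cleaner alternative that sidesteps this bookkeeping would be to construct the putative $S_n$-representation directly, for instance via Tymoczko's dot action on the cohomology of the regular semisimple Hessenberg variety associated to $h$: for low bandwidth one can exhibit an equivariant cellular paving whose cells visibly decompose into induced trivial characters of Young subgroups, proving the claim without any identity on symmetric functions.
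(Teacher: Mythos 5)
First, note that the paper offers no proof of this statement: it is imported verbatim as an external theorem of Dahlberg (Theorem 5.4 of that paper), and the surrounding text even remarks that Dahlberg's result is stronger than what is quoted. So there is no internal proof to compare against, and the only question is whether your blind argument would itself constitute a proof. It would not.

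The gap is at the central step. After reducing to a connected bandwidth-$2$ Hessenberg function (a triangular ladder), you invoke an Orellana--Scott triple-deletion identity on a triangle $\{i,i+1,i+2\}$ and assert that ``the signs combine so that the net expression is a manifestly non-negative sum'' whenever each summand graph is bandwidth-$\leq 2$ or abelian. That assertion is the entire content of the theorem and is nowhere established. The identity $X_G = X_{G-e_1} + X_{G-e_2} - X_{G-e_1-e_2}$ carries a minus sign, and the graphs obtained by deleting one or two edges of a triangle in a triangular ladder are in general no longer unit interval graphs --- they can contain induced claws --- so their chromatic symmetric functions need not be individually $e$-positive, the inductive hypothesis does not apply to them, and Theorem \ref{thm:abelian} does not apply to them. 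One must therefore track genuine cancellation across the signed sum rather than verify positivity term by term, and your plan to ``enumerate the finite list of local configurations'' does not close this: the configurations at each triangle are finite, but the intermediate graphs are not confined to any class you can handle, so no amount of local bookkeeping substitutes for the missing positivity argument. The ``cleaner alternative'' in your final paragraph is also not a proof: the dot action on the cohomology of the regular semisimple Hessenberg variety is precisely the representation whose $h$-positive decomposition is the Stanley--Stembridge conjecture in this setting, and exhibiting an equivariant paving does not by itself decompose that module into induced trivial representations. For reference, Dahlberg's actual proof proceeds through $e$-positivity of symmetric functions in noncommuting variables in the sense of Gebhard--Sagan, with an induction engineered specifically to avoid the sign problems above; if you want to prove the theorem rather than cite it, that is the route to follow.
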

Note the Dahlberg result is actually stronger, as the paper proves the result for a much larger collection of Hessenberg functions. The larger collection is not as conducive to applying Corollary \ref{cor:hook_partition}.
\begin{corollary}\label{cor:small}
Suppose $\theta$ is a hook partition. If $\mu/\nu$ is a skew shape associated to Hessenberg function $h$ such that $h(i) - i \leq 2$ for all $i \in [n]$, then $\Gamma^\theta_{\mu/\nu}$ is the character of a permutation representation of $S_n$ whose transitive components are each isomorphic to the action of $S_n$ on cosets of a Young subgroup. In other words, under these assumptions Conjecture \ref{conj:Stem_G} holds.
\end{corollary}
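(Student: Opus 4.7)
The plan is to mimic the proof of Corollary \ref{cor:preabelian} almost verbatim, substituting Theorem \ref{thm:small} for Theorem \ref{thm:abelian}. Specifically, I would invoke Corollary \ref{cor:hook_partition} to write
\[
\Gamma^{\theta}_{\mu/\nu} \;=\; \sum_{\substack{J \subset [n-1]\\ \abs{J}=k}} \Gamma_{h^J},
\]
reduce to showing each summand $\Gamma_{h^J}$ individually satisfies the Stanley--Stembridge conclusion, and then apply Theorem \ref{thm:small} to each $h^J$. A finite non-negative integer sum of characters of permutation representations whose transitive components are induced from Young subgroups is itself of the same form, so combining the summands finishes the argument.

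The one thing to verify is that the hypothesis $h(i)-i\leq 2$ is inherited by every $h^J$ appearing in Corollary \ref{cor:hook_partition}. This is immediate from the explicit definition
\[
h^J(i) = \begin{cases} h(i)-1 & i\in J\text{ and } \mu_{h(i)}-\nu_i + i - h(i)=0,\\ h(i) & \text{otherwise},\end{cases}
\]
which gives $h^J(i)\leq h(i)$ and hence $h^J(i) - i \leq h(i) - i \leq 2$ for every $i \in [n]$. Since $h^J$ is also a Hessenberg function (this was already checked in the proof of Theorem \ref{thm:hook_partition}), Theorem \ref{thm:small} applies to $\Gamma_{h^J}$ for every admissible $J$.

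There is really no main obstacle here; the content of the corollary lies entirely in Corollary \ref{cor:hook_partition} and Theorem \ref{thm:small}. The only subtlety worth flagging in the write-up is that one may first want to reduce, via the computational simplifications in Section \ref{sec:simp} (in particular Proposition \ref{prop:no_zeros} and Proposition \ref{prop:min_n}), to a skew shape with no empty rows and with $n=\ell(\mu/\nu)$, so that the hypotheses of Corollary \ref{cor:hook_partition} are satisfied; this is a cosmetic step rather than a substantive one. The proof should therefore be only two or three lines, closely parallel to that of Corollary \ref{cor:preabelian}.
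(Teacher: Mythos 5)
Your proposal is correct and follows exactly the paper's argument: apply Corollary \ref{cor:hook_partition}, note that $h^J(i) \leq h(i)$ forces $h^J(i) - i \leq 2$, and invoke Theorem \ref{thm:small} on each summand. No further comment is needed.
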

\begin{proof}
Each $h^J$ from the decomposition in Corollary \ref{cor:hook_partition} has the property that $h^J(i) \leq h(i)$. Apply Theorem \ref{thm:small}.
\end{proof}
There are several other classes of Hessenberg functions for which the Stanley-Stembridge conjecture is known. Any time the summands from Corollary \ref{cor:hook_partition} are known to fall exclusively within the known cases, we obtain a partial proof of Conjecture \ref{conj:Stem_imm}. In particular, whenever the Jacobi-Trudi matrix does not contain any $1$-s at all, the decomposition in Corollary \ref{cor:hook_partition} will simply be many copies of the original Hessenberg function. This occurs for skew shape $\mu/\nu$ with associated Hessenberg function $h$ when $\mu_{h(i)} -\nu_i + i-h(i) > 0$ for all $i \in [n]$. For any Hessenberg function $h$, it is possible to construct a Jacobi-Trudi matrix whose pattern of nonzero entries corresponds to $h$ and that contains no entries that are $1$. As such, for any particular Hessenberg function $h$ where the Stanley-Stembridge conjecture is known, there are skew shapes $\mu/\nu$ for which the decomposition in equation (\ref{eqn:expansion}) contains $\choos{n-1}{k}$ copies of $\Gamma_h$.

\appendix

\section{Computational Proofs}
A reference for the representation-theoretic calculations below is \cite{sagan2013symmetric}. A reference for the combinatorial calculations for skew-Kostka numbers is \cite[\S 7]{Stanley_Vol2}.\par 
\begin{definition}\label{def:induced}
Let $H$ be a subgroup of $G$. If $\chi$ is a character of $H$, then the \emph{induced character} of $\chi$ on $G$ is 
\[\left.\chi\right\uparrow_{H}^{G}(w) := \frac{1}{\abs{H}}\sum_{x \in G} \chi^\circ(x w x^{-1}) \;\;\text{where}\;\; \chi^\circ(v)  =\begin{cases} \chi(v) & v \in H \\  0 & v \notin H.\end{cases}
\]
for all $w \in G$.
\end{definition}
The following lemma allows us to restrict the $w \in S_n$ we must consider when computing an immanant character whose skew shape is disconnected.
\begin{lemma}\label{lem:disconnected}
Let $\mu/\nu$ be a skew shape and $i \in [n]$ such that $\mu_{i+1} \leq \nu_i$. If $\mu+\delta-w(\nu+\delta)$ has non-negative entries, then $w \in S_{\{1,...,i\}} \times S_{\{i+1,...,n\}}$.
\end{lemma}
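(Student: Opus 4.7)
The plan is to translate the non-negativity hypothesis into an inequality involving the Hessenberg function $h$ associated to $\mu/\nu$, and then to exploit the hypothesis $\mu_{i+1}\leq\nu_i$ to show that $h$ cannot reach past index $i$ from any column $k\leq i$, which will force $w$ to preserve the block decomposition $\{1,\ldots,i\}\sqcup\{i+1,\ldots,n\}$.

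First I would unpack the shuffling action. Since $w(a_1,\ldots,a_n)=(a_{w^{-1}(1)},\ldots,a_{w^{-1}(n)})$, the $j$-th entry of $\mu+\delta-w(\nu+\delta)$ equals
\[
\mu_j + (n-j) - \nu_{w^{-1}(j)} - (n - w^{-1}(j)) \;=\; \mu_j - \nu_{w^{-1}(j)} + w^{-1}(j) - j.
\]
By definition of the Hessenberg function $h(k)=\max\{\ell\in[n] : \mu_\ell-\nu_k+k-\ell\geq 0\}$, non-negativity of this entry is equivalent to $j \leq h(w^{-1}(j))$. Thus the hypothesis of the lemma is that $j \leq h(w^{-1}(j))$ for every $j\in[n]$.

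Next I would establish the key constraint $h(k)\leq i$ for all $k\leq i$. Since $h$ is weakly increasing, it suffices to show $h(i)\leq i$, i.e.\ that $\mu_{i+1}-\nu_i + i - (i+1) < 0$; this is immediate from $\mu_{i+1}\leq\nu_i$.

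Finally, suppose toward contradiction that $w\notin S_{\{1,\ldots,i\}}\times S_{\{i+1,\ldots,n\}}$. Then some element of $\{i+1,\ldots,n\}$ is sent by $w$ into $\{1,\ldots,i\}$, equivalently there exists $j\in\{i+1,\ldots,n\}$ with $w^{-1}(j)\leq i$. Combining with the previous step, $h(w^{-1}(j))\leq i < j$, contradicting $j\leq h(w^{-1}(j))$. Hence $w$ must preserve both blocks, proving the lemma.

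I do not expect any substantive obstacle: the proof is essentially a careful bookkeeping of indices. The only place where one must be slightly attentive is keeping track of $w$ versus $w^{-1}$ in the shuffling convention, and in observing that the monotonicity of $h$ is enough to propagate the single inequality $h(i)\leq i$ to all $k\leq i$.
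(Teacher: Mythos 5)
Your proof is correct and is essentially the paper's argument: both proceed by contrapositive, locating an index that $w$ sends across the block boundary and showing the corresponding entry of $\mu+\delta-w(\nu+\delta)$ is negative, using $\mu_{i+1}\leq\nu_i$ together with the monotonicity of $\mu$ and $\nu$. The paper does the inequality directly ($\mu_{w(j)}-\nu_j+j-w(j)\leq(\mu_{i+1}-\nu_i)+(j-w(j))<0$ for $j\leq i<w(j)$) rather than routing through the Hessenberg function; your detour via $h$ is a harmless repackaging, provided you note that $\{\ell:\mu_\ell-\nu_k+k-\ell\geq 0\}$ is an initial segment (since $\mu_\ell-\ell$ is strictly decreasing), which justifies both the equivalence with $j\leq h(w^{-1}(j))$ and the inference $h(i)\leq i$ from the failure at $\ell=i+1$.
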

\begin{proof}
We proceed by contrapositive. A permutation $w \in S_{\{1,...,i\}} \times S_{\{i+1,...,n\}}$ if and only if $w(\{1,...,i\}) = \{1,...,i\}$. As such, $w \notin S_{\{1,...,i\}} \times S_{\{i+1,...,n\}}$ if and only if there exists a $j \in \{1,...,i\}$ such that $w(j) > i$. Then
\begin{align*}
(\mu+\delta-w(\nu+\delta))_{w(j)} &= \mu_{w(j)}+\delta_{w(j)}-\nu_{j}-\delta_{j} \\
&= (\mu_{w(j)} - \nu_{j}) + (j-w(j))\\
&\leq (\mu_{i+1} - \nu_{i}) + (j-w(j))\\
&\lneq 0.
\end{align*}
This concludes the proof.
\end{proof}

\begin{proposition}[Proposition \ref{prop:no_zeros}]
Say $\mu/\nu$ is a skew shape such that $\mu_i = \nu_i$ for some $i \in [n]$. Let $\widehat{\mu}$ and $\widehat{\nu}$ denote, respectively, the partitions $\mu$ and $\nu$ with their $i$-th components removed. Then ${\ds \Gamma^\theta_{\mu/\nu} = \Gamma^\theta_{\widehat{\mu}/\widehat{\nu}}}$ for all $\theta$.
\end{proposition}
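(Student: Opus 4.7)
The plan is to verify the identity pointwise on $S_n$ via the defining formula for $\Gamma^\theta$, reducing both sides to a common sum over $S_{n-1}$. The key observation is that when $\mu_i = \nu_i$, the weakly-decreasing property of $\mu$ and $\nu$ gives $\mu_{i+1} \le \mu_i = \nu_i$ and $\mu_i = \nu_i \le \nu_{i-1}$, so Lemma \ref{lem:disconnected} applies at both index $i$ and index $i-1$. Intersecting the two conclusions shows that any $w' \in S_n$ with $\widehat{w'}$ non-negative must lie in $S_{\{1,\ldots,i-1\}} \times \{e\} \times S_{\{i+1,\ldots,n\}}$; in particular $w'(i) = i$. (The boundary cases $i=1$ or $i=n$ follow from a single application of Lemma \ref{lem:disconnected} on one side.) Since $K_{\theta,\widehat{w'}} = 0$ whenever $\widehat{w'}$ has a negative entry, the sum defining $\Gamma^\theta_{\mu/\nu}(w)$ only receives contributions from these ``block-diagonal'' $w'$.

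For each such $w'$, I would define $w'' \in S_{n-1}$ by deleting the fixed point at position $i$ and shifting indices above $i$ down by one. A direct computation comparing $\delta = (n-1,\ldots,0)$ against $\delta' = (n-2,\ldots,0)$ shows $\widehat{w'}_i = \mu_i - \nu_i = 0$, while for every $k \ne i$ the entries of $\widehat{w'}$ agree with the corresponding entries of $\widehat{\mu} + \delta' - w''(\widehat{\nu} + \delta')$ after the index shift. Because Kostka numbers are insensitive to the insertion or removal of zeros in the content, this gives $K_{\theta,\widehat{w'}} = K_{\theta,\widehat{w''}}$.

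Next I would apply the same argument to $\Gamma^\theta_{\widehat{\mu}/\widehat{\nu}}$, now viewing $\widehat{\mu}$ and $\widehat{\nu}$ as length-$n$ sequences padded with a trailing zero: since $\widehat{\mu}_n = \widehat{\nu}_n = 0$, the structural reduction above (with $i$ replaced by $n$) forces $w'(n) = n$ for every $w'$ giving a non-zero Kostka term, and the analogous bijection to $S_{n-1}$ produces the \emph{same} reduced content. Within a conjugacy class $C(w) \subset S_n$, the set of $w' \in C(w)$ fixing a given point is in bijection (via the reindexing) with the conjugacy class $C'' \subset S_{n-1}$ obtained by removing one $1$-cycle from $C(w)$. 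Both sums in the definition of $\Gamma^\theta_{\mu/\nu}(w)$ and $\Gamma^\theta_{\widehat{\mu}/\widehat{\nu}}(w)$ therefore equal $\sum_{w'' \in C''} K_{\theta,\widehat{w''}}$, and the common prefactor $\tfrac{n!}{|C(w)|}$ delivers the claimed equality. If $w$ has no fixed points, then $C''$ is empty and both sides vanish.

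The main obstacle is the explicit calculation in the second paragraph: one must carefully reconcile the change of $\delta$-length with the reindexing at position $i$, verifying that the two modifications cancel pointwise for all $k \ne i$. Once this identity is established, the remainder of the argument is formal, relying on Lemma \ref{lem:disconnected}, the standard invariance of Kostka numbers under rearrangement and zero-insertion, and the elementary count that within any conjugacy class of $S_n$ the number of permutations fixing a prescribed point depends only on the cycle type.
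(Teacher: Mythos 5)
Your proposal is correct and follows essentially the same strategy as the paper: both use Lemma \ref{lem:disconnected} twice to show that only permutations fixing position $i$ (respectively position $n$ for the zero-padded $\widehat{\mu}/\widehat{\nu}$) can contribute a nonzero Kostka number, and both then match the contents entry-by-entry under a reindexing that deletes the forced zero entry. The only cosmetic difference is that the paper implements this reindexing as conjugation by the cycle $(n,n-1,\ldots,i)$ inside $S_n$, giving a bijection $C(w)\to C(w)$, whereas you descend to $S_{n-1}$ and compare both sums against a common conjugacy class there; the two bookkeeping devices are interchangeable.
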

\begin{proof}
Fix $w \in S_n$. We will show that 
\begin{equation}\label{eqn:propA2}
\sum_{w' \in C(w)} K_{\theta,\mu + \delta - w'(\nu + \delta)} = \sum_{w'\in C(w)} K_{\theta,\widehat{\mu} + \delta - w'(\widehat{\nu} + \delta)}.
\end{equation}
Let $w' \in S_n$ such that $\mu+\delta-w'(\nu+\delta)$ has no negative entries. Since $\mu_i = \nu_i$, it follows that $\mu_i \leq \nu_{i-1} $ and $\mu_{i+1} \leq \nu_i$. By Lemma \ref{lem:disconnected}, $w' \in \left(S_{\{1,...,i-1\}} \times S_{\{i,...,n\}}\right)\cap \left(S_{\{1,...,i\}} \times S_{\{i+1,...,n\}}\right)$. In particular, $w' \in S_{\{1,...,i-1\}} \times S_{\{i+1,...,n\}}$. It follows $K_{\mu+\delta-w'(\nu+\delta)} \neq 0$ only if $w' \in S_{\{1,...,i-1\}} \times S_{\{i+1,...,n\}}$.\par 
Now let $w' \in S_n$ such that $\hat{\mu}+\delta-w'(\hat{\nu}+\delta)$ has no negative entries. Since $\hat{\mu}_i \leq \hat{\nu}_{i-1} $ and $0=\hat{\mu}_{n} \leq \hat{\nu}_{n-1}$, by Lemma \ref{lem:disconnected}, $w' \in \left(S_{\{1,...,i-1\}} \times S_{\{i,...,n\}}\right)\cap  S_{\{1,...,n-1\}} $. In particular, $w' \in S_{\{1,...,i-1\}} \times S_{\{i,...,n-1\}}$. It follows $K_{\hat{\mu}+\delta-w'(\hat{\nu}+\delta)} \neq 0$ only if $w' \in S_{\{1,...,i-1\}} \times S_{\{i,...,n-1\}}$.\par 
Consider the automorphism $S_n \to S_n$ given by $v \mapsto c_ivc_i^{-1}$ where $c_i:=(n,n-1,...,i)$ in cycle notation. If $v \in S_{\{1,...,i-1\}}\times S_{\{i+1,...,n\}}$ and $k \in [n]$, then 
\[
c_ivc_i^{-1}(k) = \begin{cases}
v(k) & k \in \{1,...,i-1\} \\
v(k+1) - 1 & k \in \{i,...,n-1\} \\
v(n) = n. & 
\end{cases}
\] 
So $v \mapsto c_ivc_i^{-1}$ is also a bijection from $S_{\{1,...,i-1\}}\times S_{\{i+1,...,n\}} \to S_{\{1,...,i-1\}}\times S_{\{i,...,n-1\}}$, as well as a bijection $C(w) \to C(w)$. We prove equation (\ref{eqn:propA2}) (and thus the claim) by showing for all $w' \in S_n$,
\[
K_{\theta,\mu + \delta - w'(\nu + \delta)} = K_{\theta,\widehat{\mu} + \delta - (c_iw'c_i^{-1})(\widehat{\nu} + \delta)}.
\]
In particular, we will prove that whenever $w' \in S_{\{1,...,i-1\{}\times S_{\{i+1,...,n\}}$,  it is possible to re-order the entries of $\mu + \delta - w'(\nu + \delta)$ to obtain $\widehat{\mu} + \delta - c_iw'c_i^{-1}(\widehat{\nu} + \delta)$. Since it suffices to show this for sequences with non-negative entries, we may assume $w' \in S_{\{1,...,i-1\}}\times  S_{\{i+1,...,n\}}$. \par 
Now $\mu_j = \widehat{\mu}_j$ and $\nu_j=\widehat{\nu}_j$ when $j=1,...,i-1$. Thus for all $j \in \{1,...,i-1\}$,
\begin{align*}
(\mu + \delta)_j - (\nu + \delta)_{(w')^{-1}(j)} &= (\widehat{\mu} + \delta)_j - (\widehat{\nu} + \delta)_{(w')^{-1}(j)}\\
&= (\widehat{\mu} + \delta)_j - (\widehat{\nu} + \delta)_{(c_iw'c_i^{-1})^{-1}(j)}.
\end{align*} 
In other words, the first $i-1$ entries of $\mu + \delta - w'(\nu + \delta)$ and $\widehat{\mu} + \delta - (c_iw'c_i^{-1})(\widehat{\nu} + \delta)$ are identical. \par 
Now $w'(i) = i$, so  $(\mu + \delta - w'(\nu + \delta))_i = 0$ and $c_iw'c_i^{-1}(n) = n$ so $(\widehat{\mu} + \delta - (c_iw'c_i^{-1})(\widehat{\nu} + \delta))_n = 0$ as well. Fix $k \in \{i,...,n-1\}$. Note $\widehat{\mu}_k = \mu_{k+1}$ and $\widehat{\nu}_k = \nu_{k+1}$. We show that the $k$-th element in $\widehat{\mu} + \delta - (c_iw'c_i^{-1})(\widehat{\nu} + \delta)$ is equal to the $k+1$-st element in $\mu + \delta - w'(\nu + \delta)$. 
\begin{align*}
\left(\widehat{\mu} + \delta - (c_iw'c_i^{-1})(\widehat{\nu} + \delta)\right)_k &= \widehat{\mu}_k + \delta_k - (\widehat{\nu} + \delta)_{(c_iw'c_i^{-1})^{-1}(k)} \\
&= \widehat{\mu}_k + \delta_k - \widehat{\nu}_{(w')^{-1}(k+1) - 1} - \delta_{(w')^{-1}(k+1) - 1} \\
&= \mu_{k+1} + \delta_{k+1}+1 - \nu_{(w')^{-1}(k+1)} - \delta_{(w')^{-1}(k+1)}-1\\
&= \left(\mu + \delta - w'(\nu + \delta)\right)_{k+1}.
\end{align*}
Thus we have a bijection between the entries of $\mu + \delta - w'(\nu + \delta)$ and $\widehat{\mu} + \delta - c_iw'c_i^{-1}(\widehat{\nu} + \delta)$, so $K_{\theta,\mu + \delta - w'(\nu + \delta)} = K_{\theta,\widehat{\mu} + \delta - (c_iw'c_i^{-1})(\widehat{\nu} + \delta)}$.
\end{proof}

\begin{proposition}[Proposition \ref{prop:min_n}]
Let $\mu/\nu$ be a skew shape of length at most $n-1$. If $\Gamma_{\mu/\nu}^\theta = \sum_i \Gamma_{\mu_i/\nu_i}^{(N)}$ as characters in $S_{n-1}$, then $\Gamma_{\mu/\nu}^\theta = \sum_i \Gamma_{\mu_i/\nu_i}^{(N)}$ as characters of $S_{n}$. In particular, if Conjecture \ref{conj:main_char} is true for characters $\Gamma^\theta_{\mu/\nu}$ of $S_{\ell(\mu/\nu)}$ then Conjecture \ref{conj:main_char} is true for $\Gamma^\theta_{\mu/\nu}$ characters of $S_m$ where $m \geq \ell(\mu/\nu)$.
\end{proposition}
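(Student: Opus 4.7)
The plan is to prove the stronger statement that $\Gamma^\theta_{\mu/\nu}$, viewed as a character of $S_n$, is the induced character $\left.\Gamma^\theta_{\mu/\nu}\right\uparrow_{S_{n-1}}^{S_n}$ of its $S_{n-1}$ analogue. Once this identity is established, applying induction to both sides of the assumed $S_{n-1}$-equality -- and observing that the same identity holds for every summand $\Gamma^{(N_i)}_{\mu^i/\nu^i}$, since each skew shape $\mu^i/\nu^i$ appearing as an $S_{n-1}$-character automatically has length at most $n-1$ -- yields the $S_n$-equality by linearity of induction.

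The first step is to extend $\mu$ and $\nu$ to length-$n$ sequences by padding with zeros, which is permitted by the hypothesis $\ell(\mu/\nu) \leq n-1$. For any $w \in S_n$, the $n$-th entry of $\mu + \delta - w(\nu + \delta)$ equals $-(\nu + \delta)_{w^{-1}(n)}$, since $(\mu + \delta)_n = 0$. Because $(\nu + \delta)_k = \nu_k + n - k \geq 1$ for every $k < n$, this entry is strictly negative unless $w(n) = n$. Thus $K_{\theta,\widehat{w}} = 0$ for every $w \in S_n \setminus S_{n-1}$, so the sum defining $\Gamma^\theta_{\mu/\nu}(w)$ is supported on the conjugates of $w$ lying in $S_{n-1}$.

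The second step is a compatibility check: for $w \in S_{n-1} \subset S_n$, the Kostka number $K_{\theta, \widehat{w}}$ is the same whether computed in $S_n$ or $S_{n-1}$. For $k < n$, both shifts $(\mu + \delta)_k$ and $(\nu + \delta)_{w^{-1}(k)}$ grow by $1$ when $\delta$ lengthens from $n-1$ to $n$, so the difference
\[(\mu + \delta - w(\nu + \delta))_k = \mu_k - \nu_{w^{-1}(k)} + w^{-1}(k) - k\]
is independent of this choice. The only new entry of the $S_n$ version is a trailing $0$, which does not affect the Kostka number.

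Combining these observations with the conjugacy-class size formula $\abs{C(w)} = n!/z_\lambda$, a direct calculation gives
\[\Gamma^\theta_{\mu/\nu}\big|_{S_n}(w) = \frac{z_\lambda}{z_{\lambda'}}\,\Gamma^\theta_{\mu/\nu}\big|_{S_{n-1}}(w')\]
whenever $w$ has cycle type $\lambda$ with at least one fixed point (where $w'$ denotes an $S_n$-conjugate of $w$ lying in $S_{n-1}$, whose $S_{n-1}$-cycle type $\lambda'$ is $\lambda$ with one $1$ removed), and $0$ otherwise. The same value is produced by Definition \ref{def:induced} applied to $\Gamma^\theta_{\mu/\nu}\big|_{S_{n-1}}$: the intersection $C(w) \cap S_{n-1}$ is a single $S_{n-1}$-conjugacy class of size $(n-1)!/z_{\lambda'}$ (empty if $w$ has no fixed point), so the induction formula collapses to the same scalar $z_\lambda/z_{\lambda'}$. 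This establishes the induction identity. The argument presents no genuine obstacle; the essential content lies in the first step, where the length hypothesis $\ell(\mu/\nu) \leq n-1$ is used.
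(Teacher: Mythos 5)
Your proposal is correct and follows essentially the same route as the paper: both establish $\Gamma^\theta_{\mu/\nu}\big|_{S_n} = \left.\bigl(\Gamma^\theta_{\mu/\nu}\big|_{S_{n-1}}\bigr)\right\uparrow_{S_{n-1}}^{S_n}$ by showing the Kostka numbers vanish on permutations not fixing $n$, matching the $\delta^{n-1}$ and $\delta^{n}$ shifts up to a trailing zero, comparing conjugacy-class sizes, and then concluding by linearity of induction applied to each summand. The only cosmetic difference is that you justify the vanishing via $(\mu+\delta)_n = 0$, which presumes $\mu_n = 0$; the paper's hypothesis $\ell(\mu/\nu)\leq n-1$ only guarantees $\mu_n = \nu_n$, but your inequality survives in that edge case since $(\nu+\delta)_{w^{-1}(n)} \geq \nu_n + 1 > (\mu+\delta)_n$ whenever $w^{-1}(n) < n$.
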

\begin{proof}
When viewed as a character of $S_n$, denote $\Gamma^\theta_{\mu/\nu}$ as $\Gamma_n$, and by $\Gamma_{n-1}$ when viewed as a character of $S_{n-1}$. For $k\in\{n-1,n\}$, let $C_k(w)$ be the conjugacy classes of $w$ in $S_k$, let $Z_{k}(w)$ be the centralizer of $w$ in $S_k$, and let $\delta^k = (k-1,...,1,0)$. We will show that $\Gamma_n=\left.\Gamma_{n-1}\right\uparrow_{S_{n-1}}^{S_n}$.  
Let $w \in S_n$. If $C_n(w) \cap S_{n-1} = \emptyset$, then $\left.\Gamma_{n-1}\right\uparrow_{S_{n-1}}^{S_n}(w) = 0$ by definition. Since $\ell(\mu/\nu) \leq n-1$, we know that $\mu_n=\nu_n \leq \nu_{n-1}$.  So, if $w' \in S_n$ is such that $\mu+\delta^n-w'(\nu+\delta^n)$ has no negative entry, then $w'(n)=n$ by Lemma \ref{lem:disconnected}.  As $C_n(w)$ conists of derangements, if $w' \in C_n(w)$ then $\mu+\delta^n-w'(\nu+\delta^n)$ has a negative entry. So $\Gamma_n(w) = 0$ as well. \par 
If $C_n(w) \cap S_{n-1} \neq \emptyset$, there exists a $v \in S_{n-1}$ such that $\chi(w)=\chi(v)$ for any class function $\chi$ on $S_n$. Thus it suffices to prove that $\Gamma_n(w) = \left.(\Gamma_{n-1})\right\uparrow_{S_{n-1}}^{S_n}(w)$ for $w \in S_{n-1}$.\par 
Conjugacy classes in symmetric groups are characterized by cycle types, so if $v,w \in S_{n-1}$, then $v \in C_{n-1}(w)$ if and only if $v \in C_n(w)$. In particular, if $w \in S_{n-1}$ then $S_{n-1} \cap C_n(w) = C_{n-1}(w)$. Finally $\abs{Z_n(w)} = \frac{n!}{\abs{C_n(w)}}$, so 
\begin{align*}
\left.(\Gamma_{n-1})\right\uparrow_{S_{n-1}}^{S_n}(w) &= \frac{1}{\abs{S_{n-1}}}\sum_{x \in S_n} \Gamma_{n-1}^\circ(x w x^{-1})\\
&=  \frac{1}{(n-1)!}\abs{Z_n(w)}\sum_{\sigma\in C_{n-1}(w)} \Gamma_{n-1}(\sigma )\\
&= \frac{1}{(n-1)!}\abs{Z_n(w)}\abs{C_{n-1}(w)} \Gamma_{n-1}(w)\\
&=\frac{n!\abs{C_{n-1}(w)}}{(n-1)!\abs{C_n(w)}}  \left(\frac{(n-1)!} {\abs{C_{n-1}(w)}}   \sum_{w'\in C_{n-1}(w)} K_{\theta,\mu + \delta^{n-1} - w'(\nu + \delta^{n-1})}\right)\\
&=\frac{n!}{\abs{C_n(w)}}\sum_{w'\in C_{n-1}(w)} K_{\theta,\mu + \delta^{n-1} - w'(\nu + \delta^{n-1})}.
\end{align*}
On the other hand, the first $n-1$ elements of $\delta^n$ are exactly one greater than the those in $\delta^{n-1}$. More specifically, under point-wise addition of integer sequences (and considering $\delta^{n-1}$ as a sequence of length $n$ by appending the integer $0$), $\delta^{n} = \delta^{n-1} + (1,1,...,1)$. In particular, noting $w'((1,1,...,1)) = (1,1,...,1)$, up to appending a $0$, $\delta^{n-1}-w'(\delta^{n-1}) = \delta^n-w'(\delta^n)$. So
\begin{align*}
\Gamma_n(w) &= \frac{n!}{\left|C_n(w)\right|} \sum_{w'\in C_n(w)} K_{\theta,\mu + \delta^n - w'(\nu + \delta^n)}\\
&=\frac{n!}{\abs{C_n(w)}}\sum_{w'\in C_{n-1}(w)} K_{\theta,\mu + \delta^{n-1} - w'(\nu + \delta^{n-1})}.
\end{align*} 
 Thus ${\ds \left.\Gamma_n(w) = (\Gamma_{n-1})\right\uparrow_{S_{n-1}}^{S_n}(w)}$ for all $w \in S_{n-1}$. \par 
The proposition now follows by linearity of induced characters. The ``particular" part of our claim follows by induction on $n$.
\end{proof}
\begin{proposition}[Proposition \ref{prop:connected_order}]
Let $\mu^0/\nu^0$ and $\mu^1/\nu^1$ be two skew shapes whose Young diagrams have identical connected components. Let $\theta \vdash N =  \abs{\mu^0/\nu^0} = \abs{\mu^1/\nu^1}$. Then ${\ds \Gamma^\theta_{\mu^0/\nu^0} = \Gamma^\theta_{\mu^1/\nu^1}}$.
\end{proposition}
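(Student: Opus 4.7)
The plan is to compute both characters at an arbitrary conjugacy class $C(w) \subseteq S_n$ and show that the sums defining $\Gamma^\theta_{\mu^0/\nu^0}(w)$ and $\Gamma^\theta_{\mu^1/\nu^1}(w)$ agree. First I would apply Lemma \ref{lem:disconnected} iteratively at each disconnection gap of $\mu^0/\nu^0$---i.e.\ each index $i$ with $\mu^0_{i+1} \leq \nu^0_i$, which picks out exactly the row-indices separating consecutive connected components. This shows that the only $w' \in S_n$ contributing a nonzero Kostka number to $\Gamma^\theta_{\mu^0/\nu^0}(w)$ lie in $\prod_j S_{R^0_j}$, where $R^0_j$ denotes the row-block hosting the $j$-th component of $\mu^0/\nu^0$. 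The same argument restricts the contributing $w'$ for $\Gamma^\theta_{\mu^1/\nu^1}$ to the analogous product $\prod_j S_{R^1_j}$.

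Next I would establish a block-decomposition lemma: if a connected component $C$ of $\mu/\nu$ occupies rows $[a+1,b]$, and $w' \in S_{[a+1,b]} \hookrightarrow S_n$ fixes the complement, then the sub-vector of $\widehat{w'} = \mu + \delta - w'(\nu+\delta)$ at positions $[a+1,b]$ equals $\mu^C + \delta^C - (w')^C(\nu^C + \delta^C)$, where $(\mu^C,\nu^C)$ describes $C$ as its own skew shape, $\delta^C = (b-a-1,\ldots,0)$, and $(w')^C \in S_{b-a}$ is the local version of $w'$. The key observation is that on the block $[a+1,b]$, the vector $\mu+\delta$ differs from $\mu^C + \delta^C$ by the constant $n-b$, and identically for $\nu+\delta$, so these shifts cancel under subtraction. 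Consequently, for any $w' \in \prod_j S_{R_j}$ the full vector $\widehat{w'}$ is a disjoint concatenation of such local vectors, one per component. Because $K_{\theta,\cdot}$ is invariant under reordering and removing zeros, this shows $K_{\theta,\widehat{w'}}$ depends only on the multiset of components $\{C_j\}$ and the local permutations $(w')^{C_j}$, not on where each component sits along the diagonal.

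The proof concludes by exhibiting a bijection. Let $\sigma \in S_n$ be the unique order-preserving map that sends each row-block $R^0_j$ of $\mu^0/\nu^0$ onto the row-block $R^1_{\pi(j)}$ of $\mu^1/\nu^1$ hosting the same component, where $\pi$ records the rearrangement of components. Then $w' \mapsto \sigma w' \sigma^{-1}$ is a bijection $\prod_j S_{R^0_j} \to \prod_j S_{R^1_j}$ that preserves cycle type (so carries $C(w)$ to itself) and, because $\sigma$ is order-preserving on each block, satisfies $(\sigma w' \sigma^{-1})^{C_{\pi(j)}} = (w')^{C_j}$ as elements of the appropriate $S_{b-a}$. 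Combined with the block-decomposition lemma, this gives $K_{\theta,\widehat{w'}} = K_{\theta,\widehat{\sigma w' \sigma^{-1}}}$, so the two sums $\sum_{w' \in C(w)} K_{\theta,\widehat{w'}}$ agree and hence so do the characters. The main obstacle will be the careful index bookkeeping in the block-decomposition lemma, in particular verifying the uniform-shift claim relating local and global $\delta$-vectors on each component block.
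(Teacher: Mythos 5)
Your proposal is correct and follows essentially the same route as the paper: restrict the contributing $w'$ to the Young subgroup determined by the connected components via Lemma \ref{lem:disconnected}, conjugate by the order-preserving permutation $\sigma$ that rearranges the row-blocks, and check that the entries of $\mu+\delta-w'(\nu+\delta)$ are simply permuted (the paper verifies this directly via $\sigma(t)-\sigma(s)=t-s$ on each component, which is exactly your uniform-shift cancellation), so the Kostka numbers agree. The only cosmetic difference is that you factor the comparison through standalone local skew shapes, whereas the paper compares the two global content vectors to each other directly.
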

\begin{proof}
This will proceed similarly to the proof of Proposition \ref{prop:no_zeros} (Proposition A.2). First, let $n=\len(\mu^0/\nu^0) = \len(\mu^1/\nu^1)$, so $\Gamma^\theta_{\mu^0/\nu^0}$ and $\Gamma^\theta_{\mu^1/\nu^1}$ are characters of $S_n$.\par
A disconnected skew shape $\mu/\nu$ is naturally associated to a Young subgroup $S$ of $S_n$, defined by the rule that the simple transposition $(i,i+1)\in S$ if $\mu_{i+1} \gneq \nu_i$ (i.e. if the $i$-th and $i+1$-th rows of $\mu/\nu$ are in the same connected component). By Lemma \ref{lem:disconnected}, $K_{\theta,\mu +\delta-w'(\nu+\delta)} = 0$ whenever $w'\notin S$. Let $S^0$ and $S^1$ be the Young subgroups corresponding to $\mu^0/\nu^0$ and $\mu^1/\nu^1$ respectively. Let $\I^0,\I^1$ be, respectively, sets orbits of $S^0$ and $S^1$ on $[n]$. \par  
Finally let $\sigma\in S_n$ so that permuting the rows of $\mu^0/\nu^0$ via $\sigma$ (i.e. sending row $i$ to row $\sigma(i)$) gives $\mu^1/\nu^1$. We require that $\sigma$ be order preserving within the row indices of each connected component of $\mu^0/\nu^0$.\par 
We prove that
\begin{equation}\label{eqn:conn1}
\sum_{w' \in C(w)} K_{\theta,\mu^0 + \delta - w'(\nu^0 + \delta)} = \sum_{w'\in C(w)} K_{\theta,\mu^1 + \delta - w'(\nu^1 + \delta)}.
\end{equation}
Let $\phi_\sigma \colon S_n \to S_n$ be the automorphism $\phi_\sigma(w) = \sigma w \sigma^{-1}$. Note $\phi_\sigma(C(w)) = C(w)$. Equation (\ref{eqn:conn1}) will follow directly once we show that
\[
K_{\theta,\mu^0 + \delta - w'(\nu^0 + \delta)} =  K_{\theta,\mu^1 + \delta - \phi_\sigma(w')(\nu^1 + \delta)}
\] 
for all $w \in S_n$.\par 
We observe that $\phi_\sigma(S^0) = S^1$, and that $\sigma$ is order preserving within the row indices of each connected component of $\mu^0/\nu^0$, so for all $s,t \in I^0 \in \I^0$ and $w' \in S^0$,
\begin{equation*}
\sigma(t)-\sigma(s) = t-s,\;\;\; \text{and }\;\;t-w'(s) = \sigma(t)-\phi_\sigma(w')(\sigma(s)).
\end{equation*}
Let $k \in I^0\in \I^0$ and $w' \in S^0$. Then $w'^{-1}(k) \in I^0$ and
\begin{align*}
\mu^0_{k} - w'(\nu^{0})_k &=\mu^0_{k} - \nu^0_{{w'}^{-1}(k)}\\ 
&=\mu^1_{\sigma(k)}-\nu^1_{\sigma {w'}^{-1}(k)}\\
&= \mu^1_{\sigma(k)}-\nu^1_{\phi_\sigma({w'}^{-1}) \sigma(k)}\\
&= \mu^1_{\sigma(k)}-\phi_\sigma(w') (\nu^{1})_{\sigma(k)}.
\end{align*}
Similarly  
\begin{align*}
\delta_k-w'(\delta)_k &= (n-k-1) - (n-{w'}^{-1}(k)-1) \\
&= {w'}^{-1}(k)-k \\
&= \phi_\sigma({w'}^{-1})(\sigma(k))-\sigma(k)\\
&= \delta_{\sigma(k)} - \phi_\sigma(w')(\delta)_{\sigma(k)}.
\end{align*}
Combining the previous two calculations, it follows that
\begin{align*}
(\mu^0+\delta-w'(\nu^0-\delta))_k &= \mu^0_{k} - w'(\nu^0)_k + \delta_k-w'(\delta)_k\\
&= \mu^1_{\sigma(k)}-\phi_\sigma(w') (\nu^{1})_{\sigma(k)}+\delta_{\sigma(k)} - \phi_\sigma(w')(\delta)_{\sigma(k)}\\
&= (\mu^1+\delta - \phi_\sigma(w')(\nu^1-\delta))_{\sigma(k)}.
\end{align*}
We conclude that $K_{\theta,\mu^0+\delta-w'(\nu^0+\delta)} = K_{\theta,\mu^1+\delta-\phi_\sigma(w')(\nu^1+\delta))}$. 
\end{proof}

\begin{proposition}[Proposition \ref{prop:disconnect}]
Let $\mu/\nu$ be a disconnected skew shape with two component skew shapes $\mu^k/\nu^k$ and $\mu^r/\nu^r$. Let $N_k=\abs{\mu^k/\nu^k}$ and $N_r = \abs{\mu^r/\nu^r}$. 
Then
\[
\Gamma^{\theta}_{\mu/\nu} =\sum_{\substack{ \lambda \vdash  N_k\\\lambda <\theta}} \sum_{\sigma \vdash N_r} c^\theta_{\lambda\sigma}\Gamma^{\lambda}_{\mu^k/\nu^k} \circ \Gamma^{\sigma}_{\mu^r/\nu^r},
\]
where $c^{\theta}_{\lambda\sigma}$ is a Littlewood-Richardson coefficient.
\end{proposition}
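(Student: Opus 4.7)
The plan is to compute $\Gamma^\theta_{\mu/\nu}(w)$ directly from its definition via Kostka numbers, mimicking the style of the proofs of Propositions \ref{prop:no_zeros} and \ref{prop:connected_order}. Set $n_k = \len(\mu^k/\nu^k)$ and $n_r = \len(\mu^r/\nu^r)$. Since $\mu/\nu$ is disconnected, $\mu_{n_k+1} \leq \nu_{n_k}$, so Lemma \ref{lem:disconnected} forces $K_{\theta,\widehat{w'}}$ to vanish whenever $w' \notin S_{\{1,\dots,n_k\}} \times S_{\{n_k+1,\dots,n\}}$. My first step is therefore to restrict the sum defining $\Gamma^\theta_{\mu/\nu}(w)$ to this Young subgroup.

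For $w' = (w'_k, w'_r)$ in the Young subgroup, my second step is to establish the factorization $\widehat{w'} = \widehat{w'_k} \cup \widehat{\tilde{w}'_r}$ (concatenation of sequences), where $\tilde{w}'_r$ reindexes $w'_r$ as an element of $S_{n_r}$. This needs only the observation that the first $n_k$ entries of $\delta^n=(n-1,\dots,0)$ exceed those of $\delta^{n_k}$ by the constant $n_r$, which is fixed by $w'_k$, so the shift cancels; the last $n_r$ entries of $\delta^n$ coincide with $\delta^{n_r}$. I then invoke the Kostka--Littlewood--Richardson identity
\[
K_{\theta,\alpha\cup\beta} \;=\; \sum_{\lambda\vdash\abs{\alpha},\,\sigma\vdash\abs{\beta}} c^\theta_{\lambda\sigma}\, K_{\lambda,\alpha}\, K_{\sigma,\beta},
\]
which follows from $h_\alpha h_\beta = h_{\alpha\cup\beta}$ together with $h_\gamma = \sum_\rho K_{\rho,\gamma} s_\rho$ and $s_\lambda s_\sigma = \sum_\theta c^\theta_{\lambda\sigma} s_\theta$. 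Interchanging summations then gives
\[
\Gamma^\theta_{\mu/\nu}(w) \;=\; \sum_{\lambda,\sigma} c^\theta_{\lambda\sigma}\cdot\frac{n!}{\abs{C_n(w)}}\!\!\sum_{v=(v_k,v_r)\in C_n(w)\cap(S_{n_k}\times S_{n_r})}\!\!\! K_{\lambda,\widehat{v_k}}\, K_{\sigma,\widehat{v_r}}.
\]

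The last step is to recognize the inner sum as $(\Gamma^\lambda_{\mu^k/\nu^k}\circ\Gamma^\sigma_{\mu^r/\nu^r})(w)$. Expanding via Definition \ref{def:induced}, I will convert $\sum_{x\in S_n}$ into $\abs{Z_n(w)}\sum_{v \in C_n(w)\cap (S_{n_k}\times S_{n_r})}$ using that each conjugate $v$ is achieved by exactly $\abs{Z_n(w)} = n!/\abs{C_n(w)}$ elements $x$. After substituting the Kostka-number formula for each $\Gamma$ factor and reindexing so that the outermost variable ranges over pairs $(u_k, u'_r) \in C_n(w)\cap (S_{n_k}\times S_{n_r})$, each such pair will be produced exactly $\abs{C_{n_k}(u_k)}\cdot\abs{C_{n_r}(u'_r)}$-many times, cancelling the reciprocals of the centralizers introduced by the two $\Gamma$ factors; what remains is precisely the inner sum above. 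I expect this final centralizer bookkeeping to be the main obstacle, as it requires carefully tracking the correspondence between $S_n$-conjugacy classes meeting the Young subgroup and the products of $S_{n_k}$- and $S_{n_r}$-conjugacy classes that form this intersection. (A less combinatorial alternative would apply Lemma \ref{lem:inner_prod} to the block upper triangular $H_{\mu/\nu}$, use the branching rule for $\chi^\lambda$ to decompose the ordinary immanant into a product of component immanants, and then equate coefficients of $s_\theta$.)
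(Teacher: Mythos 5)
Your proposal is correct and follows essentially the same route as the paper's proof, just traversed in the opposite direction: the paper starts from the sum of induction products and simplifies to $\Gamma^\theta_{\mu/\nu}(w)$, while you start from $\Gamma^\theta_{\mu/\nu}(w)$, restrict via Lemma \ref{lem:disconnected}, split the content $\widehat{w'}$ into a concatenation, and apply the same Kostka--Littlewood--Richardson identity (which the paper states as two separate facts about $K_{\theta,c}$ and $K_{\theta/\lambda,c}$) before doing the identical centralizer bookkeeping for the induction product.
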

\begin{proof}
We will abuse notation and let $k = \ell(\mu^k/\nu^k)$ and $r = \ell(\mu^r/\nu^r)$. First, we require two facts about Kostka numbers. The skew-Kostka number $K_{\theta/\lambda,c}$ for skew shape $\theta/\lambda$ and finite integer sequence $c$ is the number of semi-standard tableaux of shape $\theta/\lambda$ and content $c$. Let $\theta \vdash N$ and $c = (c_1,...,c_k,...,c_{k+r})$ be a finite sequence of non-negative integers that sum to $N$. Let $M_k = \sum_{i=1}^k c_i$. Then
\begin{equation}
K_{\theta,c} = \sum_{\substack{ \lambda\vdash M_k \\\lambda < \theta}} K_{\lambda,(c_1,...,c_k)}\cdot K_{\theta / \lambda, (c_{k+1},...,c_{k+r})}.
\end{equation}
Secondly, skew Kostka numbers are sums of Kostka numbers via the Littlewood-Richardson rule \cite{Stanley_Vol2}. Formally,
\begin{equation}
K_{\theta/\lambda,c} = \sum_{\sigma} c^{\theta}_{\lambda \sigma} K_{\sigma,c}.
\end{equation}
As in the proof of Proposition \ref{prop:min_n} (Proposition A.3), we let $\delta^r = (r-1,...,1,0)$. For any element $x = x_kx_r$ of $S_{k} \times S_{r}$, 
 \[\mu+\delta^n - x(\nu+\delta^n) = (\mu^k + \delta^{n_k} - x_k(\nu^k + \delta^{k}))\cdot (\mu^r + \delta^{r} - x_r(\nu^r + \delta^{r})),\]
 where $\cdot$ is concatenation of sequences.\par 
Let $w \in S_n$ be arbitrary. We proceed by evaluating and simplifying the expression
 \[
 \sum_{\substack{\lambda <\theta \\ \lambda\vdash N_k}} \sum_{\sigma} c^\theta_{\lambda\sigma}\Gamma^{\lambda}_{\mu^k/\nu^k} \circ \Gamma^{\sigma}_{\mu^r/\nu^r}(w).
 \]
 By definition, 
 \begin{align*}
 \Gamma^{\lambda}_{\mu^k/\nu^k} \circ \Gamma^{\sigma}_{\mu^r/\nu^r}(w) &= \left.\left( \Gamma^{\lambda}_{\mu^k/\nu^k} \times \Gamma^{\sigma}_{\mu^r/\nu^r}\right)\right\uparrow_{S_k \times S_r}^{S_n}(w)\\
 &=  \frac{1}{k!r!} \sum_{x \in S_n} \left( \Gamma^{\lambda}_{\mu^k/\nu^k} \times \Gamma^{\sigma}_{\mu^r/\nu^r}\right)^\circ\left(x^{-1}wx\right)\\
 &=  \frac{1}{k!r!} \abs{\Ce_n(w)}\sum_{x \in C_n(w)} \left( \Gamma^{\lambda}_{\mu^k/\nu^k} \times \Gamma^{\sigma}_{\mu^r/\nu^r}\right)^\circ\left(x\right).
 \end{align*}
Now ${\displaystyle \Gamma^{\lambda}_{\mu^k/\nu^k} \times \Gamma^{\sigma}_{\mu^r/\nu^r}}$ is defined to be zero on all elements \emph{not} in $S_k \times S_r$. To shorten notation moving forward, let $C_n^{kr}(w) := C_n(w) \cap \left(S_k\times S_r\right)$. Then 
 \begin{align*}
 \Gamma^{\lambda}_{\mu^k/\nu^k} \circ \Gamma^{\sigma}_{\mu^r/\nu^r}(w) &= \frac{1}{k!r!} \abs{\Ce_n(w)}\sum_{x_kx_r \in C_n^{kr}(w)} \left( \Gamma^{\lambda}_{\mu^k/\nu^k} \times \Gamma^{\sigma}_{\mu^r/\nu^r}\right)\left(x_kx_r\right)\\
 &= \frac{1}{k!r!} \abs{\Ce_n(w)}\sum_{x_kx_r \in C_n^{kr}(w)} \Gamma^{\lambda}_{\mu^k/\nu^k}  \left(x_k\right)\Gamma^{\sigma}_{\mu^r/\nu^r}\left(x_r\right).
 \end{align*}
By definition
\begin{align*}
\Gamma^{\lambda}_{\mu^k/\nu^k}  \left(x_k\right) &= \frac{k!}{\abs{C_k(x_k)}} \sum_{x_k' \in C_k(x_k)} K_{\lambda, \mu^k-\delta^k + x_k'(\nu^k-\delta^k)},\text{ and}\\
\Gamma^{\sigma}_{\mu^r/\nu^r}  \left(x_r\right) &= \frac{r!}{\abs{C_r(x_r)}} \sum_{x_r' \in C_r(x_r)} K_{\sigma, \mu^r-\delta^r + x_r'(\nu^r-\delta^r)}.
\end{align*}
To further shorten notation, let ${\displaystyle K_{\lambda,\widehat{x_k'}}}$ and ${\displaystyle K_{\sigma,\widehat{x_r'}}}$ denote the Kostka numbers in the above sums. Now ${\displaystyle \Gamma^{\lambda}_{\mu^k/\nu^k} \circ \Gamma^{\sigma}_{\mu^r/\nu^r}(w)}$ is equal to 
\begin{align*}
&\frac{\abs{\Ce_n(w)}}{k!r!} \sum_{x_kx_r \in C_n^{kr}(w)} \left(\frac{k!}{\abs{C_k(x_k)}} \sum_{x_k' \in C_k(x_k)} K_{\lambda,\widehat{x_k'}}\right)\left(\frac{r!}{\abs{C_r(x_r)}} \sum_{x_r' \in C_r(x_r)} K_{\sigma, \widehat{x_r'}}\right)\\
&=\abs{\Ce_n(w)}\sum_{x_kx_r \in C_n^{kr}(w)} \frac{1}{\abs{C_k(x_k)}\abs{C_r(x_r)}} \sum_{\substack{x_k' \in C_k(x_k)\\x_r' \in C_r(x_r)}} K_{\lambda,\widehat{x_k'}}  K_{\sigma, \widehat{x_r'}}.
\end{align*}
Now we decompose $C_n^{kr}(w)$ further. If $\lambda_k,\lambda_r$ are partitions, we write $\lambda_k\cdot \lambda_r$ for the partition constructed by concatenating $\lambda_k$ and $\lambda_r$ and reordering to be decreasing. Let $\rho_n(w)$ be the cycle type of $w$ in $S_n$ so that $C_n(w) = \{w' \in S_n \mid \rho_n(w') = \rho_n(w)\}$. Define $\rho_k$ and $\rho_r$ similarly. Let $C_n(\lambda)$ denote the conjugacy class of cycle type $\lambda$ in $S_n$. Then 
\begin{align*}
C_n(w)\cap \left(S_k\times S_r\right) &= \{x_kx_r \in S_k \times S_r \mid \rho_k(x_k)\cdot \rho_r(x_r) = \rho_n(w)\} \\
&= \bigsqcup_{\substack{\lambda_k\cdot \lambda_r = \rho_n(w) \\ \lambda_k \vdash k,\; \lambda_r \vdash r}} \{x_kx_r \in S_k \times S_r \mid \rho_k(x_k)=\lambda_k,\;\; \rho_r(x_r) = \lambda_r)\} \\
&= \bigsqcup_{\substack{\lambda_k\cdot \lambda_r = \rho_n(w) \\ \lambda_k \vdash k,\; \lambda_r \vdash r}} C_k(\lambda_k)\times C_r(\lambda_r)
\end{align*}
Now ${\ds \Gamma^\lambda_{\mu^k/\nu^k} \circ \Gamma^{\sigma}_{\mu^r/\nu^r}(w)}$ simplifies to 
\begin{align*}
&\abs{\Ce_n(w)}\sum_{\substack{\lambda_k\cdot \lambda_r = \rho_n(w) \\ \lambda_k \vdash k,\; \lambda_r \vdash r}} \sum_{\substack{x_k \in C_w(\lambda_k)\\x_r \in C_r(\lambda_r)}} \frac{1}{\abs{C_k(\lambda_k)}\abs{C_r(\lambda_r)}} \sum_{\substack{x_k' \in C_k(\lambda_k)\\x_r' \in C_r(\lambda_r)}} K_{\lambda,\widehat{x_k'}}  K_{\sigma, \widehat{x_r'}}\\
&=\abs{\Ce_n(w)}\sum_{\substack{\lambda_k\cdot \lambda_r = \rho_n(w) \\ \lambda_k \vdash k,\; \lambda_r \vdash r}} \sum_{\substack{x_k' \in C_k(\lambda_k)\\x_r' \in C_r(\lambda_r)}} K_{\lambda,\widehat{x_k'}}  K_{\sigma, \widehat{x_r'}}\\
&=\abs{\Ce_n(w)}\sum_{x_kx_r \in C_n^{kr}(w)}  K_{\lambda,\widehat{x_k}}  K_{\sigma, \widehat{x_r}}.
\end{align*}
Concatenating $\widehat{x_k}$ and $\widehat{x_r}$ gives $\widehat{x_kx_r} = \widehat{x}$. From equations A.2 and A.3, we see that
\begin{align*}
 \sum_{\substack{ \lambda \vdash N_k\\\lambda <\theta}} \sum_{\sigma \vdash N_r} c^\theta_{\lambda\sigma}\Gamma^{\lambda}_{\mu^k/\nu^k} \circ \Gamma^{\sigma}_{\mu^r/\nu^r}(w) &=  \sum_{\substack{\lambda \vdash N_k\\\lambda <\theta}} \sum_{\sigma\vdash N_r} c^\theta_{\lambda\sigma}\abs{\Ce_n(w)}\sum_{x_kx_r \in C_n^{kr}(w)}  K_{\lambda,\widehat{x_k}}  K_{\sigma, \widehat{x_r}} \\
&= \abs{\Ce_n(w)}\sum_{x_kx_r \in C_n^{kr}(w)}  \sum_{\substack{\lambda \vdash N_k\\\lambda <\theta}}  K_{\lambda,\widehat{x_k}}  \sum_{\sigma \vdash N_r} c^\theta_{\lambda\sigma}K_{\sigma, \widehat{x_r}}\\
&= \abs{\Ce_n(w)}\sum_{x_kx_r \in C_n^{kr}(w)}  \sum_{\substack{\lambda \vdash N_k\\\lambda <\theta}}  K_{\lambda,\widehat{x_k}} K_{\theta/\lambda, \widehat{x_r}}\\
&= \abs{\Ce_n(w)}\sum_{x_kx_r \in C_n^{kr}(w)}   K_{\theta,\widehat{x_k}\cdot \widehat{x_r}}\\
&= \abs{\Ce_n(w)}\sum_{x \in C_n^{kr}(w)}   K_{\theta,\widehat{x}}.
\end{align*}
By Lemma \ref{lem:disconnected}, $K_{\theta,\widehat{x}} = 0$ whenever $x \notin S_k\times S_r$, so ${\ds \sum_{x \in C_n^{kr}(w)}   K_{\theta,\widehat{x}} = \sum_{x \in C_n(w)}   K_{\theta,\widehat{x}}}$, and we obtain exactly ${\displaystyle \Gamma^{\theta}_{\mu/\nu}(w)}$. Since $w$ was arbitrary, this proves the proposition.
 \end{proof}

\bibliography{mybib}
\bibliographystyle{alpha}

\end{document}